\DeclareMathSymbol{\twoheadrightarrow} {\mathrel}{AMSa}{"10}
\def\Q{{\mathbb Q}}
\def\Z{{\mathbb Z}}
\def\C{{\mathbb C}}
\def\R{{\mathbb R}}
\def\F{{\mathbb F}}
\def\Sn{{\mathbf S}_n}
\def\An{{\mathbf A}_n}
\def\Gal{\mathrm{Gal}}
\def\Alt{\mathrm{Alt}}
\def\Perm{\mathrm{Perm}}
\def\End{\mathrm{End}}
\def\Aut{\mathrm{Aut}}
\def\Hom{\mathrm{Hom}}
\def\Mat{\mathrm{Mat}}
    \def\RR{\mathfrak{R}}
\def\fchar{\mathrm{char}}
\def\dim{\mathrm{dim}}
\newtheorem{thm}{Theorem}[section]
\newtheorem{lem}[thm]{Lemma}
\newtheorem{cor}[thm]{Corollary}
\newtheorem{prop}[thm]{Proposition}
\theoremstyle{definition}
\newtheorem{defn}[thm]{Definition}
\newtheorem{ex}[thm]{Example}
\newtheorem{sect}[thm]{}
\newtheorem{rem}[thm]{Remark}
\title[Non-isogenous elliptic curves and hyperelliptic jacobians]
{Non-isogenous elliptic curves  and hyperelliptic jacobians}
\author[Yuri G. Zarhin]{Yuri G. Zarhin}
\address{Department of Mathematics, Pennsylvania State University,
University Park, PA 16802, USA}
\email{zarhin\char`\@math.psu.edu}
\dedicatory{To Yuri Ivanovich Manin on the occasion of his 85th birthday}
\thanks{The author  was partially supported by Simons Foundation Collaboration grant   \# 585711.
Part of this work was done during his stay in 2022 at the Max-Planck Institut f\"ur Mathematik (Bonn, Germany), whose hospitality and support are gratefully acknowledged.}
\begin{document}
\begin{abstract}
Let $K$ be a field of characteristic different from $2$, $\bar{K}$ its algebraic closure.
Let $n \ge 3$ be an odd prime such that $2$ is a primitive root modulo $n$.
Let $f(x)$ and $h(x)$ be degree $n$ polynomials with coefficients in $K$ and without repeated roots.
Let us consider genus $(n-1)/2$ hyperelliptic curves
$C_f: y^2=f(x)$ and $C_h: y^2=h(x)$, and their jacobians $J(C_f)$ and $J(C_h)$, which are
$(n-1)/2$-dimensional abelian varieties defined over $K$. 

Suppose that one of the
polynomials is irreducible and the other reducible.
We prove that if $J(C_f)$ and $J(C_h)$ are  isogenous over $\bar{K}$ then both jacobians are abelian varieties of CM type
with multiplication by the field of $n$th roots of $1$.

We also discuss the case when both polynomials are irreducible while their splitting fields are linearly disjoint.
In particular, we prove that if $\fchar(K)=0$, the Galois group of one of the polynomials is doubly transitive and the Galois group of the other
is a cyclic group of order $n$, then $J(C_f)$ and $J(C_h)$ are  not isogenous over $\bar{K}$.
\end{abstract}

\subjclass[2010]{14H40, 14K05, 11G30, 11G10}
\keywords{elliptic curves, hyperelliptic curves, jacobians, isogenies of abelian varieties}

\maketitle
\section{Definitions, notations, statements}

Let $K$ be a field,  $\bar{K}$ its algebraic closure, and
 $\Gal(K)=\Aut(\bar{K}/K)$ the group of all automorpisms  of the
field extension $\bar{K}/K$. If $K_s \subset \bar{K}$ is the separable closure of $K$ in $\bar{K}$
then $\Gal(K)$ coincides with the Galois group of the (possibly infinite) Galois extension $K_s/K$.

If $X$ is an abelian variety over $K$ then we write $\End(X)$ for the ring of its $\bar{K}$-endomorphisms and $\End^0(X)$ for the corresponding $\Q$-algebra $\End(X)\otimes\Q$. If $Y$ is (may be another) abelian variety over  $K$ then we write
$\Hom(X,Y)$ for the group of all $\bar{K}$-homomorphisms from  $X$ to
$Y$, which is a free commutative group of finite rank. It is well known that $\Hom(X,Y)=0$ if and only if
$\Hom(Y,X)=0$.

Let $f(x) \in K[x]$ be a nonconstant polynomial of degree $n$ without
repeated roots.  We write $\RR_f\subset \bar{K}$ for the  set of its
roots, and $K(\RR_f) \subset \bar{K}$ for the splitting field of $f(x)$. Then $\RR_f$ consists of $n$
elements and
$$\RR_f \subset K(\RR_f)\subset K_s\subset \bar{K}.$$
We write
$$\Gal(f)=\Gal(f/K)=\Aut(K(\RR_f)/K)=\Gal(K(\RR_f)/K)$$
 for the Galois group
of $f(x)$ over $K$.   The group $\Gal(f/K)$ permutes elements of $\RR_f$ and
therefore can be identified with a certain subgroup of the group
$\Perm(\RR_f)$ of all permutations of  $\RR_f$. (The action of $\Gal(f/K)$ on $\RR_f$ is {\sl transitive}
if and only if $f(x)$ is {\sl irreducible} over $K$.)
If we choose an order on the $n$-element set 
  $\RR_f$ then we get a group isomorphism between
$\Perm(\RR_f)$ and the full symmetric group $\Sn$, which makes
$\Gal(f/K)$ a certain subgroup of $\Sn$.  We write $\Alt(\RR_f)$ for the only index 2 subgroup of 
$\Perm(\RR_f)$, which corresponds to the alternating (sub)group $\An$ of $\Sn$ under any isomorphism between $\Perm(\RR_f)$  and $\Sn$.  Slightly abusing notation, we say that $\Gal(f)$ is $\Sn$ (resp. $\An$) if it coincides with $\Perm(\RR_f)$ (resp. $\Alt(\RR_f)$).

Throughout the paper  (unless otherwise stated)
 we assume that $\fchar(K)\ne 2$.
 
\subsection{Elliptic curves}
Let us assume that $n=3$ (i.e., $f(x)$ is a cubic polynomial) and consider the elliptic
curve
$$C_f: y^2=f(x)$$
viewed as a one-dimensional abelian variety defined over $K$ with the  infinite  point taken as the zero of group law. As usual, $j(C_f)\in K$ denotes the $j$-{\sl invariant} of the elliptic curve $C_f$.

Let $h(x)\in K[x]$ be another cubic polynomial without repeated roots and 
$$C_h: y^2=h(x)$$
be the corresponding elliptic curve (one-dimensional abelian variety) over $K$.  It is well known that $j(C_f)=j(C_h)$ if and only if the elliptic curves $C_f$ and $C_h$ are isomorphic over $\bar{K}$ \cite[Chapter III, Prop. 1.4b]{Silverman0}.

The aim of this paper is to discuss when $C_f$ and $C_h$ are {\sl not} isogenous over $\bar{K}$. 
 There are several known criteria for elliptic curves over number fields not to be isogenous that are based on their arithmetic properties (reductions, arithmetic properties of the corresponding $j$-invariants) \cite[p. 645]{MZ20}.  See also \cite[Th. 1.2]{ZarhinSh03} and \cite[Sect. 2]{ZarhinPisa}. There are also  recent results describing the ``asymptotic behavior'' of the number of elliptic curves with $j$-invariant in certain countable sets  of complex numbers (e.g., the set $\Z[\sqrt{-1}]$ of Gaussian integers) that are not isogenous to  elliptic curves whose $j$-invariants lie on a given real algebraic curve in $\C=\R^2$ \cite[Th. 1.7  and Sect. 3]{MZ20}).
 
Here we discuss  criteria that use  the properties of 
$f(x)$ and $h(x)$ over arbitrary $K$ only. 
Our main results  are the following two assertions.

\begin{thm}
\label{endo} Let $K$ be a field of characteristic different from
$2$.  Let $f(x)$ and $h(x)$ be cubic polynomials over $K$ without repeated roots.
Suppose that exactly one
of the two polynomials is irreducible.

Let us assume that $C_f$ and $C_h$ are  isogenous over $\bar{K}$. Then:

\begin{itemize}
\item[(i)]
 Both 
$\Q$-algebras
$\End^0(C_f)$ and $\End^0(C_h)$ contain a subfield isomorphic to $\Q(\sqrt{-3})$.  
\item[(ii)]
If $\fchar(K)=0$ then both $C_f$ and $C_h$ are  isogenous over $\bar{K}$ to the elliptic curve 
$y^2=x^3-1$.
\end{itemize}
\end{thm}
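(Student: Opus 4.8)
The plan is to play the two $2$-torsion Galois modules against each other. Since $f$ is an irreducible cubic, $\Gal(f)$ contains a $3$-cycle $\tau$ acting on $\RR_f$; and since $h$ is a reducible cubic it has a root in $K$, so $[K(\RR_h):K]\le 2$. A $3$-cycle lies in $\A_3$ and hence fixes every subextension of $K(\RR_f)/K$ of degree $\le 2$, in particular $K(\RR_f)\cap K(\RR_h)$; therefore the compatible pair $(\tau,\mathrm{id})$ extends to a single $\sigma\in\Gal(K)$ acting as a $3$-cycle on $\RR_f$ and trivially on $\RR_h$. Because the nonzero points of $C_f[2]$ (resp. $C_h[2]$) are the $(r,0)$ with $f(r)=0$ (resp. $h(r)=0$), $\sigma$ acts on $C_f[2]\cong\F_2^2$ as a fixed-point-free element $T$ of order $3$ in $\GL_2(\F_2)\cong\SS_3$ and acts trivially on $C_h[2]$. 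I will use the conjugation action ${}^\sigma\psi=\sigma\circ\psi\circ\sigma^{-1}$ of $\Gal(K)$ on $M:=\Hom(C_f,C_h)$ (defined since both curves are over $K$) and the $\Gal(K)$-equivariant reduction $M\to\Hom_{\F_2}(C_f[2],C_h[2])$, $\psi\mapsto\bar\psi:=\psi|_{C_f[2]}$, whose kernel is exactly $2M$.

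First I would exclude $\End^0(C_f)=\Q$. Then $M=\Z\psi_0$ is free of rank one and $\Gal(K)$ acts through $\{\pm1\}$, so ${}^\sigma\psi_0=\pm\psi_0$; reducing modulo $2$, where the sign disappears and $\sigma$ is trivial on $C_h[2]$, equivariance gives $\bar\psi_0=\bar\psi_0\circ T^{-1}$, i.e. $\bar\psi_0\circ(T^{-1}-1)=0$. As $T$ is fixed-point-free, $T^{-1}-1$ is invertible, so $\bar\psi_0=0$ and $\psi_0\in 2M$, which is absurd for a generator. Hence $C_f$ (and so $C_h$) has extra endomorphisms.

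Assume now $\fchar(K)=0$, so $F:=\End^0(C_f)$ is imaginary quadratic, $M$ is a rank-one lattice over the order $\O:=\End(C_f)$ with $M\otimes\Q$ one-dimensional over $F$, and $\Gal(K)$ acts $F$-semilinearly. I would first show $2$ is inert in $F$: identifying $C_f[2]$ with the free rank-one $\O/2\O$-module, a fixed-point-free order-$3$ automorphism is incompatible with the module structure when $2$ splits or ramifies, since then the pertinent normalizer inside $\GL_2(\F_2)\cong\SS_3$ contains no element of order $3$; thus $\O/2\O\cong\F_4$. Next, if $\sigma$ acted on $F$ by complex conjugation then $T$ would be Frobenius-semilinear over $\F_4$ and hence of order $2$, contradicting $\ord(T)=3$; so $\sigma$ fixes $F$ and acts $\O$-linearly on $M$, i.e. as multiplication by some $\lambda_\sigma\in F^\times$ with $\lambda_\sigma M=M$. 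Such multipliers form a finite group, so $\lambda_\sigma$ is a root of unity. Picking $\psi\in M$ with $\bar\psi\ne0$ (possible since $\ker=2M$), the map $\bar\psi$ is a nonzero $\F_4$-linear endomorphism of a line, hence bijective, and comparing ${}^\sigma\psi=\lambda_\sigma\psi$ with $\overline{{}^\sigma\psi}=\bar\psi\circ T^{-1}$ identifies the reduction $\bar\lambda_\sigma\in(\O/2\O)^\times=\F_4^\times$ with $T^{-1}$, an element of order $3$. Hence the roots of unity of $F$ surject onto $\F_4^\times\cong\mu_3$, forcing $3\mid|\mu(F)|$, which holds only for $F=\Q(\sqrt{-3})$. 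This proves (i).

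Part (ii) is then immediate: by (i) both $C_f$ and $C_h$ have $\End^0\cong\Q(\sqrt{-3})$, and since $y^2=x^3-1$ also has CM by $\Q(\sqrt{-3})$ while any two elliptic curves over $\bar K$ (in characteristic $0$) with CM by the same imaginary quadratic field form a single isogeny class, both $C_f$ and $C_h$ are isogenous over $\bar K$ to $y^2=x^3-1$. I expect the crux to be the rigidity argument of the third paragraph — extracting that $\sigma$ acts by a global root of unity whose reduction has order $3$ — preceded by the construction of the single $\sigma$ realizing a $3$-cycle on $\RR_f$ and the identity on $\RR_h$. The positive-characteristic case of (i), where $\End^0(C_f)$ may be the quaternion algebra of a supersingular curve, would require a separate verification that this algebra contains $\Q(\sqrt{-3})$.
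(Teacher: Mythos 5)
Your characteristic\mbox{-}zero argument is essentially correct and genuinely different from the paper's, but the theorem asserts (i) in every characteristic $\ne 2$, and the positive\mbox{-}characteristic case you defer at the end is a real gap, not a routine verification. If $p=\fchar(K)>0$ and both curves are supersingular, then $\End^0(C_f)\cong\End^0(C_h)\cong D_{p,\infty}$, the quaternion algebra ramified at $p$ and $\infty$, and your module analysis (rank\mbox{-}one lattices over an imaginary quadratic order, $\O/2\O\cong\F_4$, multipliers that are roots of unity) does not apply at all. Worse, the ``separate verification that this algebra contains $\Q(\sqrt{-3})$'' you propose is impossible in general: $\Q(\sqrt{-3})$ embeds into $D_{p,\infty}$ if and only if $p$ does not split in $\Q(\sqrt{-3})$, i.e.\ if and only if $p\not\equiv 1 \pmod 3$. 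So for $p\equiv 1\pmod 3$ what has to be proved is that an isogenous supersingular pair satisfying the hypotheses cannot exist, and your framework gives no way to see this. The paper's proof (Proposition \ref{isogEll}, specialized to $n=3$) is structured precisely to avoid this case split: after base changes it uses Silverberg's theorem (Remark \ref{silver}) to know that all endomorphisms of $C_h$ are defined over $K$ and all homomorphisms $C_f\to C_h$ over $K(C_f[4])=K(C_f[2])$, takes a minimal isogeny $\phi$, shows $\phi$ is not defined over $K$ (simplicity of $C_f[2]$ versus triviality of $C_h[2]$), and proves that $\sigma\mapsto a_\sigma$, where $\sigma(\phi)=a_\sigma\phi$, is a nontrivial homomorphism $\Z/3\Z\to\End^0(C_h)^{*}$. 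This manufactures an element $c\ne 1$ with $c^3=1$ and hence a copy of $\Q(\sqrt{-3})=\Q[c]$ inside $\End^0(C_h)$ \emph{whatever that algebra is}; in the supersingular case this is exactly what forces $p\not\equiv 1\pmod 3$ (Lemma \ref{divisionA} and the remark following it).

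For the record, your characteristic\mbox{-}zero route (explicit $\sigma$ acting as a fixed\mbox{-}point\mbox{-}free $T$ on $C_f[2]$ and trivially on $C_h[2]$, exclusion of $\End^0(C_f)=\Q$ by reducing a generator of the rank\mbox{-}one lattice mod $2$, then the analysis forcing $2$ inert, $\sigma$ fixing $F$, and a root of unity of order $3$) is a nice, more elementary alternative to the paper's descent argument, and the exclusion of $\End^0(C_f)=\Q$ is airtight. Two small repairs are needed in the last paragraph. First, the claim that $\bar\psi$ is ``$\F_4$\mbox{-}linear, hence bijective'' is unjustified, since you never put an $\F_4$\mbox{-}structure on $C_h[2]$ compatible with $\bar\psi$; but you do not need it: once $\sigma$ fixes $\O=\End(C_f)$, the map $T$ commutes with the $\O/2\O$\mbox{-}action, so $T^{-1}$ is itself an $\F_4$\mbox{-}scalar on the line $C_f[2]$, and any difference of two $\F_4$\mbox{-}scalars is zero or invertible, which lets you cancel $\bar\psi$. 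Second, $\lambda_\sigma$ need not lie in $\O$ when $\O$ is a non\mbox{-}maximal order, so its reduction mod $2$ is not a priori defined; argue instead that $\lambda_\sigma=\pm 1$ would give $\bar\psi\circ(1-T^{-1})=0$ and hence $\bar\psi=0$, absurd, so $\lambda_\sigma$ is a root of unity of order $3$, $4$, or $6$, whence $F=\Q(i)$ or $\Q(\sqrt{-3})$; and $\Q(i)$ is excluded because no order in $\Q(i)$ has residue ring $\F_4$ at $2$. (Also, if you extend this analysis to ordinary CM curves in characteristic $p$, the freeness of $C_f[2]$ over $\O/2\O$ needs a reference, since your lattice justification is characteristic\mbox{-}zero specific.) Your part (ii) coincides with the paper's: (i) plus the fact that characteristic\mbox{-}zero elliptic curves with CM by the same imaginary quadratic field form one $\bar{K}$\mbox{-}isogeny class.
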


\begin{thm}
\label{main} Let $K$ be a field of characteristic different from
$2$. Let $f(x), h(x)\in K[x]$ be
cubic polynomials without repeated roots that enjoy the following properties.

\begin{enumerate}
\item[(i)]  The splitting fields of
 $f(x)$ and $h(x)$ are linearly disjoint over  $K$.
\item[(ii)]
$h(x)$ is irreducible over $K$.
\item[(iii)]
 $\Gal(f/K)=\mathbf{S}_3$, i.e., $K(\RR_f)$ has degree $6$ over $K$.
\end{enumerate}

If $C_f$ and $C_h$ are  isogenous over $\bar{K}$ then
$p=\fchar(K)$ is 
a prime that is not congruent to $1$ modulo $3$, and both $C_f$ and $C_h$ are supersingular elliptic curves. 
\end{thm}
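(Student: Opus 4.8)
The plan is to assume that $C_f$ and $C_h$ are isogenous over $\bar K$ and to use that any $\bar K$-isogeny induces an isomorphism $\End^0(C_f)\cong\End^0(C_h)$, so both algebras are of the same type: $\Q$, an imaginary quadratic field $F$, or a quaternion algebra (the supersingular case, which forces $\fchar(K)=p>0$). First I would record the dictionary between $2$-torsion and roots: the nonzero points of $C_f[2]$ are the $(r,0)$ with $r\in\RR_f$, so $K(C_f[2])=K(\RR_f)$ and the mod-$2$ representation identifies the image of $\Gal(K)$ in $\Aut(C_f[2])=\GL_2(\F_2)\cong\mathbf S_3$ with $\Gal(f)$. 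Hypothesis (iii) makes this image all of $\mathbf S_3$, while irreducibility of $h$ makes the image of $\Gal(K)$ on $C_h[2]$ equal to $\mathbf S_3$ or to the cyclic group $\mathbf A_3$; in every case the module is irreducible over $\F_2$ and stays irreducible after restriction to any subgroup of index $\le 2$.

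I would dispose of the case $\End^0(C_f)=\Q$ first. Then $\Hom(C_f,C_h)\cong\Z$, so $\Gal(K)$ acts on it through $\{\pm1\}$ and a generating isogeny $\phi$ is defined over a field $M$ with $[M:K]\le 2$. Over $M$ the map $\phi$ gives an isomorphism $V_2(C_f)\cong V_2(C_h)$ of $\Gal(M)$-modules; comparing the two $\Gal(M)$-stable lattices $T_2(C_f)$ and $V_2(\phi)^{-1}T_2(C_h)$ and invoking Brauer--Nesbitt yields $C_f[2]\cong C_h[2]$ as $\Gal(M)$-modules (both are irreducible over $\Gal(M)$ by the previous paragraph), whence $M(\RR_f)=M(\RR_h)$. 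A degree count now closes this case: linear disjointness gives $[K(\RR_f)K(\RR_h):K]=6\cdot[K(\RR_h):K]\ge 18$, while $M(\RR_f)=M(\RR_h)$ has degree at most $2\cdot 6=12$ over $K$, a contradiction.

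For $\End^0(C_f)=\End^0(C_h)=F$ imaginary quadratic the key point is that $\alpha\mapsto\phi\alpha\phi^{-1}$ is $\Gal(K)$-equivariant, because the discrepancy $\phi^\sigma\phi^{-1}$ lies in $F^\times$ and conjugation by it is trivial ($F$ being commutative). Hence the two ``complex-conjugation characters'' $c_f,c_h\colon\Gal(K)\to\Gal(F/\Q)=\{\pm1\}$ coincide. Since $\Gal(f)=\mathbf S_3$ is non-abelian it can only sit in the normalizer of the non-split Cartan, so $2$ is inert in $F$ and $c_f$ is nontrivial with fixed field the quadratic subfield $K(\sqrt{\Delta_f})=K(\RR_f)^{\rho_{f,2}^{-1}(\mathbf A_3)}$. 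If $\Gal(h)=\mathbf A_3$ the image on $C_h[2]$ lies in the Cartan, so $c_h$ is trivial, contradicting $c_f=c_h$; if $\Gal(h)=\mathbf S_3$ then $c_h$ is nontrivial with fixed field $K(\sqrt{\Delta_h})$, and $c_f=c_h$ forces $K(\sqrt{\Delta_f})=K(\sqrt{\Delta_h})$, a common quadratic subfield of $K(\RR_f)$ and $K(\RR_h)$ again contradicting linear disjointness. So this case is also impossible.

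The remaining, and hardest, case is the supersingular one: $\End^0(C_f)$ is a quaternion algebra, $p=\fchar(K)>0$, and $C_h$ is supersingular as well. Here $j(C_f)\in\F_{p^2}$, so $C_f$ is a twist of a curve $E_0$ defined over $\F_{p^2}$, and the action of $\Gal(K)$ on $C_f[2]\cong E_0[2]$ is generated by the twisting cocycle (landing in the image of $\Aut_{\bar K}(E_0)$ in $\mathbf S_3$) together with the $\Gal$-action on the $2$-torsion of the constant curve $E_0$, which is cyclic. Running through the three possibilities for $\Aut_{\bar K}(E_0)$, I expect to show that the non-abelian image $\mathbf S_3$ can occur only when $j(C_f)=0$: for $j\ne 0,1728$ the automorphisms act trivially on $2$-torsion, so $\Gal(f)$ equals the cyclic group coming from $E_0$, and for $j=1728$ the Weierstrass model $y^2=x^3+ax$ shows the image is contained in a group of order $2$. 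Once $j(C_f)=0$, supersingularity of $y^2=x^3-1$ is equivalent to $p$ being non-split in $\Q(\sqrt{-3})$, i.e.\ to $p\not\equiv1\pmod 3$, and $C_h$ is supersingular as it is isogenous to $C_f$. The main obstacle is precisely this last case: the careful bookkeeping of the twisting cocycle and of the image of $\Aut_{\bar K}(E_0)$ in $\GL_2(\F_2)$ needed to force $j(C_f)=0$; after that the congruence $p\not\equiv1\pmod3$ drops out immediately.
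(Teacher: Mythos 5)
Your proposal is correct, but it takes a genuinely different route from the paper's. The paper obtains Theorem \ref{main} as the case $n=3$ of Theorem \ref{mainH}: the dichotomy ``$\Hom=0$ in both directions, or $\fchar(K)=p>0$ and both curves supersingular'' is imported from Theorem 2.1 of \cite{ZarhinSh03} (applied to the $2$-torsion Galois modules, absolutely simple for $f$ and simple for $h$, with linearly disjoint fields of definition); then, in the supersingular case, a Galois-descent argument on a minimal isogeny (Proposition \ref{isogEll}, resting on Silverberg's theorem that endomorphisms are defined over $K(X[4])$) produces an order-$3$ element of $\End^0$, i.e.\ an embedding $\Q(\zeta_3)\hookrightarrow D_{p,\infty}$, and Lemma \ref{divisionA} (Brauer groups and local class field theory) converts this into $p\not\equiv 1\pmod 3$. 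You instead trichotomize by the isogeny-invariant type of $\End^0$ and argue with elliptic-curve-specific tools: for $\End^0=\Q$, descent of a generator of $\Hom\cong\Z$ to a quadratic extension $M$, identification of the irreducible $\Gal(M)$-modules $C_f[2]\cong C_h[2]$, and a degree count against linear disjointness (you can even skip Brauer--Nesbitt: a generator $\phi$ is not divisible by $2$, hence already induces a nonzero, therefore bijective, map $C_f[2]\to C_h[2]$); for $\End^0$ imaginary quadratic, the equality of conjugation characters $c_f=c_h$ combined with the mod-$2$ Cartan analysis and linear disjointness; for the quaternionic case, $j(C_f)\in\F_{p^2}$ and a twisting analysis forcing $j(C_f)=0$, after which Deuring's criterion gives the congruence. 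The paper's route buys uniformity and generality (all primes $n$ with $2$ a primitive root $\bmod\ n$, hence hyperelliptic jacobians); yours buys a self-contained elliptic-curve proof avoiding \cite{ZarhinSh03}, Silverberg's theorem and class field theory, and it yields the extra conclusion $j(C_f)=0$ in the supersingular case.

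Two details should be nailed down, neither of which is a genuine gap. First, in the CM case the phrase ``$2$ is inert in $F$'' must be routed through the order $\O=\End(C_f)$: the reduction $\O/2\O\hookrightarrow\End_{\F_2}(C_f[2])$ is injective (an endomorphism killing $C_f[2]$ equals $2\beta$), and it is the \emph{centralizer}, not the normalizer, that does the work --- $\ker(c_f)$ has index $\le 2$ and its mod-$2$ image centralizes the image of $\O/2\O$, which forces that image to be the non-split Cartan algebra $\F_4$ and $\rho_f(\ker(c_f))=\mathbf{A}_3$ (the split Cartan and the nilpotent algebra have centralizers of order $1$ and $2$ in $\GL_2(\F_2)$, too small to contain $\mathbf{A}_3$); similarly, your step ``image in the Cartan $\Rightarrow c_h$ trivial'' needs the observation that complex conjugation induces the nontrivial Frobenius on $\O_h/2\O_h\cong\F_4$, whereas conjugation by Cartan elements is trivial. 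Second, $\F_{p^2}$ need not be contained in $K$, so the descended curve $E_0$ should be taken over $K\cap\bar{\F}_p$; this is all the argument uses, since the action of $\Gal(K)$ on $E_0[2]$ then factors through the procyclic group $\Gal(\bar{\F}_p/K\cap\bar{\F}_p)$. Your treatment of $j=1728$ is fine as stated: for $E_0: y^2=x^3+ax$ both the automorphisms and the constant-field Galois action fix the rational point $(0,0)$ of order $2$, so the twisted image lies in a single transposition subgroup, incompatible with $[K(\RR_f):K]=6$ (alternatively, $j(C_f)=1728$ forces $f$ to acquire a $K$-rational root after completing the cube, contradicting irreducibility).
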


\begin{rem}
If $\fchar(K)=0$ then it follows from Theorem \ref{main} that if cubic polynomials $f(x)$ and $h(x)$ enjoy properties (i), (ii), (iii) of Theorem \ref{main} 
then $C_f$ and $C_h$ are not isogenous over $\bar{K}$. This assertion is a special case (with $m=n=3$) of \cite[Th. 1.2]{ZarhinSh03}.
\end{rem}

\begin{ex}
\label{OsadaS}
 Let $K=\Q, \ f(x)=x^3-5, \ h(x)=x^3-15x+22$. Clearly,
$K(\RR_f)=\Q(\sqrt{-3}, \sqrt[3]{5})$ is a sextic number field, i.e. $\Gal(f/\Q)=\mathbf{S}_3$, and
$h(x)=(x-2)(x^2+2x-11)$ is reducible over $\Q$. So,  $f(x)$ and $h(x)$ satisfy the conditions of Theorem \ref{endo}.
It is well known that the endomorphism ring $\End(C_f)$ is $\Z\left[\frac{-1+\sqrt{-3}}{2}\right]$. It is also known \cite[Appendix A, p. 483]{Silverman} that $\End(C_h)$ is 
$$\Z+2\cdot \Z\left[\frac{-1+\sqrt{-3}}{2}\right]=\Z\left[\sqrt{-3}\right].$$

Anyway, both $C_f$ and $C_h$ are CM elliptic curves whose endomorphism algebras (over an algebraic closure $\bar{\Q}$ of $\Q$) are isomorphic to $\Q(\sqrt{-3})$. Hence, $C_f$ and $C_h$  are isogenous to each other over
$\bar{\Q}$ and to $y^2=x^3-1$. (Actually, $C_f$ is even isomorphic to $y^2=x^3-1$ over $\bar{\Q}$).
\end{ex}

\begin{rem}
\label{Chow}
Let $\tilde{K}$ be an overfield of $K_s$. If $X$ and $Y$ are abelian varieties over $K$ then, by a theorem of Chow (\cite[Ch. 2, Th. 5]{LangAV}, \cite[Th. 3.19]{Conrad}), all their $\tilde{K}$-homomorphisms (and $\tilde{K}$-endomorphisms) are defined over $K_s$. In particular, $X$ and $Y$ are isogenous over $\tilde{K}$ if and only if they are isogenous over $K$.
\end{rem}

\begin{cor}
\label{rational}
Let $K=\Q$.
Let $f(x)\in \Q[x]$ be an irreducible cubic polynomial and $h(x)\in \Q[x]$ a reducible cubic polynomial. 
 
Then precisely one of the following two conditions holds.

\begin{itemize}
\item[(i)]
The elliptic curves $C_f$ and $C_h$ are not isogenous over $\bar{\Q}$ (and even over $\C$).
\item[(ii)]
Both $j(C_f)$ and $j(C_h)$ lie in a $3$-element set
$$S=\{0,\ 2^4 3^3 5^3, \ -2^{15}3\cdot 5^3\}\subset\Q.$$
\end{itemize}
\end{cor}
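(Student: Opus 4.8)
The plan is to reduce the whole statement to the single biconditional \emph{$C_f$ and $C_h$ are isogenous over $\bar{\Q}$ if and only if both $j(C_f)$ and $j(C_h)$ lie in $S$}; once this is proved, alternatives (i) and (ii) are complementary, so precisely one of them holds. First I would record two reductions. Since $f(x),h(x)\in\Q[x]$, the curves $C_f$ and $C_h$ are defined over $\Q$, hence $j(C_f),j(C_h)\in\Q$. Moreover, by Chow's theorem (Remark \ref{Chow}) we have $\Hom_{\C}(C_f,C_h)=\Hom_{\bar{\Q}}(C_f,C_h)$, so ``isogenous over $\C$'' and ``isogenous over $\bar{\Q}$'' coincide; this accounts for the parenthetical in (i).

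For the forward implication I would apply Theorem \ref{endo}. As $f(x)$ is irreducible and $h(x)$ is reducible, exactly one of the two polynomials is irreducible, so its hypotheses hold. Assuming $C_f$ and $C_h$ isogenous over $\bar{\Q}$ and using $\fchar(\Q)=0$, part (ii) of Theorem \ref{endo} shows that each of $C_f,C_h$ is isogenous over $\bar{\Q}$ to the elliptic curve $y^2=x^3-1$, whose endomorphism algebra over $\bar{\Q}$ is isomorphic to $\Q(\sqrt{-3})$. Since isogenous elliptic curves have isomorphic endomorphism algebras, both $C_f$ and $C_h$ have complex multiplication by an order in the field $L=\Q(\sqrt{-3})$. (This also follows from part (i): a copy of the quadratic field $L$ can embed into the endomorphism algebra of an elliptic curve in characteristic $0$ only if it fills it up.)

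It then remains to identify $S$ with the set of rational $j$-invariants of elliptic curves having CM by an order in $L$. By the standard theory of complex multiplication (see, e.g., \cite{Silverman}), such a curve has $[\Q(j):\Q]$ equal to the class number $h(\O)$ of the relevant order $\O$, so rationality of $j$ forces $h(\O)=1$; the orders of class number $1$ in $L$ are precisely those of discriminant $-3,-12,-27$ (conductors $1,2,3$), with singular moduli $0$, $2^4 3^3 5^3$, and $-2^{15}3\cdot 5^3$ respectively, that is, the three elements of $S$. Hence $j(C_f),j(C_h)\in S$, giving (ii). Conversely, if $j(C_f),j(C_h)\in S$ then over $\bar{\Q}$ each of $C_f,C_h$ is isomorphic to a curve with CM by an order in $L$; as any two elliptic curves with CM by orders in the same imaginary quadratic field are isogenous over $\C$ (both being isogenous to $\C/\O_L$), $C_f$ and $C_h$ are isogenous, completing the biconditional.

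The main obstacle is not the logical skeleton, which is routine once Theorem \ref{endo} is granted, but the explicit arithmetic of the last step: one must know that $L=\Q(\sqrt{-3})$ has exactly three orders of class number $1$ and that their singular moduli are the three integers displayed in $S$. The remaining ingredients—descent of the $j$-invariants to $\Q$, the identification of isogeny over $\C$ with isogeny over $\bar{\Q}$, and the forcing of the CM field to be $\Q(\sqrt{-3})$—follow formally from material already in the paper.
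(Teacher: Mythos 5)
Your proposal is correct and follows essentially the same route as the paper: Theorem \ref{endo} plus Chow's theorem (Remark \ref{Chow}) reduce the corollary to the biconditional ``$C_f$ and $C_h$ are isogenous over $\bar{\Q}$ if and only if both $j$-invariants lie in $S$,'' which in turn rests on the classification of rational $j$-invariants of elliptic curves with $\End^0$ containing $\Q(\sqrt{-3})$. The only difference is that the paper simply cites this classification (\cite[Sect. 2, p. 295]{SerreCM}, \cite[Appendix A, Sect. 3]{Silverman}), whereas you rederive it via class numbers of the orders of discriminant $-3,-12,-27$ and their singular moduli, and you make explicit the standard fact (implicit in the paper's ``if and only if'') that curves with CM by orders in the same imaginary quadratic field are mutually isogenous.
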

\begin{proof}[Proof of Corollary \ref{rational} (modulo Theorem \ref{endo})]
Let us consider an elliptic curve $C$  that is defined over $\Q$. Then $\End^0(C)$ contains $\Q(\sqrt{-3})$ (actually coincides with it) 
if and only if the $j$-invariant of $C$ lies in $S$ (\cite[Sect. 2, p. 295]{SerreCM}, \cite[Appendix A, Sect. 3]{Silverman}). Now the desired result follows from Theorem \ref{endo}
combined with Remark \ref{Chow}.
\end{proof}

\begin{cor}
\label{S3function}
Let $k$ be an algebraically closed field of characteristic $0$ and $K$ an overfield of $k$.
Let $f(x)\in K[x]$ be an irreducible cubic polynomial with $\Gal(f/K)=\mathbf{S}_3$. (E.g., $K=k(t)$ is the field of rational functions in one variable $t$ over $k$
 and $f(x)=x^3-x-t$). 
 Let $h(x)\in K[x]$ be a reducible cubic polynomial
without repeated roots.

Then the elliptic curves $C_f$ and $C_h$ are not isogenous over $\bar{K}$.
\end{cor}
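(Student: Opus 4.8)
The plan is to argue by contradiction and exploit the $2$-torsion. Since $k$ has characteristic $0$ and $K\supseteq k$, we have $\fchar(K)=0$. Suppose $C_f$ and $C_h$ were isogenous over $\bar{K}$. As $f$ is irreducible and $h$ is reducible, exactly one of the two cubics is irreducible, so Theorem \ref{endo} applies; part (i) gives that $\End^0(C_f)$ contains a copy of $\Q(\sqrt{-3})$. For an elliptic curve in characteristic $0$ the endomorphism algebra is either $\Q$ or an imaginary quadratic field, so in fact $\End^0(C_f)\cong\Q(\sqrt{-3})$ and $C_f$ has complex multiplication.

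The next step is to locate the $j$-invariant inside $k$. By the standard theory of complex multiplication, a CM elliptic curve over a field of characteristic $0$ has algebraic $j$-invariant, so $j(C_f)\in\bar{\Q}$. Since $k$ is algebraically closed and contains $\Q$, we have $\bar{\Q}\subseteq k$, whence $j(C_f)\in k$. Because $k$ is algebraically closed, there is an elliptic curve $E$ defined over $k$ with $j(E)=j(C_f)$, and then $C_f$ and $E$ become isomorphic over $\bar{K}$; in other words $C_f$ is a $K$-form (twist) of the curve $E$, which is already defined over $k\subseteq K$. (One could instead invoke Theorem \ref{endo}(ii) to conclude directly that $C_f$ is isogenous over $\bar{K}$ to $y^2=x^3-1$, which again forces $C_f$ to be CM and hence $j(C_f)\in\bar{\Q}\subseteq k$.)

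Finally I would derive the contradiction from the mod-$2$ Galois representation. The nonzero points of $C_f[2]$ are exactly the points $(\alpha,0)$ with $\alpha\in\RR_f$, so the image of $\Gal(K)$ acting on $C_f[2]\cong\F_2^2$ is precisely $\Gal(f/K)=\mathbf{S}_3$, the full group $\GL_2(\F_2)$. On the other hand, since $E$ is defined over $k$ and $k$ is algebraically closed of characteristic $0$, all of $E[2]$ is $k$-rational, hence fixed by $\Gal(K)$; and since $C_f$ is obtained from $E$ by twisting by a cocycle valued in $\Aut_k(E)$, the action of $\Gal(K)$ on $C_f[2]$ factors through the image of $\Aut_k(E)$ in $\Aut(E[2])=\GL_2(\F_2)$. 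In characteristic $0$ the group $\Aut_k(E)$ is cyclic (of order $2$, $4$, or $6$), so this image is a cyclic subgroup of $\mathbf{S}_3$, necessarily proper. But $\mathbf{S}_3$ is non-abelian and so cannot be contained in a cyclic group, contradicting $\Gal(f/K)=\mathbf{S}_3$. Hence $C_f$ and $C_h$ are not isogenous over $\bar{K}$. I expect the one point that needs care to be the identification, for a twist $C_f$ of $E/k$, of the mod-$2$ image with a subgroup of the image of $\Aut_k(E)$ on $E[2]$: this rests on the cocycle computation together with the $k$-rationality of $E[2]$, and it is exactly the algebraicity of $j(C_f)$ that places $E$ over $k$ and makes this $2$-torsion argument available.
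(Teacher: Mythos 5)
Your proof is correct, but its second half takes a genuinely different route from the paper's. Both arguments begin identically: Theorem \ref{endo}(i), together with the fact that in characteristic $0$ the endomorphism algebra of an elliptic curve is either $\Q$ or an imaginary quadratic field, shows that an isogeny would force $C_f$ to have CM by $\Q(\sqrt{-3})$. At that point the paper quotes Lemma 2.4 of \cite{ZarhinPisa}, which asserts that $\Gal(f/K)=\mathbf{S}_3$ forces $j(C_f)\notin k$; combined with the algebraicity of CM $j$-invariants \cite[Ch. II, Sect. 6, Th. 6.1]{Silverman}, this contradicts the CM conclusion. You use the same two ingredients in the opposite order and then, instead of citing the external lemma, prove the special case of it that is needed: CM gives $j(C_f)\in\bar{\Q}\subseteq k$, so $C_f$ is a twist of an elliptic curve $E$ defined over $k$; since $E[2]\subset E(k)$ is fixed by $\Gal(K)$ and $\Aut_{\bar{K}}(E_{\bar{K}})=\Aut_k(E)$ is cyclic in characteristic $0$, the mod-$2$ image of any twist of $E$ lands in the image of $\Aut_k(E)$ in $\GL_2(\F_2)$, a cyclic group of order at most $3$ --- incompatible with the mod-$2$ image of $C_f$ being $\Gal(f/K)=\mathbf{S}_3$, i.e., all of $\GL_2(\F_2)$. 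Your cocycle computation is sound, and is even simpler than you suggest: because $\Gal(K)$ acts trivially on $\Aut_{\bar{K}}(E_{\bar{K}})$, the cocycle is a genuine homomorphism, and in any case the set-theoretic containment of the mod-$2$ image in the image of $\Aut_k(E)$ already suffices. What your route buys is a self-contained argument that makes explicit where $\mathbf{S}_3$ (as opposed to $\mathbf{A}_3$, for which the statement would fail) is actually used; what the paper's route buys is brevity, by delegating exactly that step to \cite{ZarhinPisa}.
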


\begin{proof}[Proof of Corollary \ref{S3function} (modulo Theorem \ref{endo})]
Clearly,  the field $k$ contains $\bar{\Q}$. By Lemma 2.4 on p. 366 of \cite{ZarhinPisa}, $j(C_f) \not \in k$.  In particular, $j(C_f) \not \in \bar{\Q}$. 
In light of \cite[Ch. II, Sect. 6, Th. 6.1]{Silverman},
$C_f$ is {\sl not} of CM type over $\bar{K}$, i.e.,
$$\End(C_f)=\Z, \ \End^0(C_f)=\Q.$$
Now the desired result follows from Theorem \ref{endo}.
\end{proof}

We will need the following elementary observation that will be proven in Section \ref{crucialP}.

\begin{prop}
\label{BourbakiLang}
Let $n\ge 3$ be a prime, $K$ a field.  Let $f(x), h(x) \in K[x]$
be degree $n$ irreducible polynomials without repeated roots. Suppose that  $\Gal(h/K)$
is a cyclic group of order $n$, i.e., the Galois extension $K(\RR_h)/K$ is  cyclic  of degree $n$.
Then precisely one of the following two conditions holds.

\begin{itemize}
\item[(i)]
The fields $K(\RR_f)$ and $K(\RR_h)$ are linearly disjoint over $K$.
\item[(ii)]
The field $K(\RR_f)$ contains $K(\RR_h)$ and the cyclic group $\Z/n\Z$ of order $n$
is isomorphic to a quotient of $\Gal(f/K)$. In addition, the permutation group
$\Gal(f/K)$ is not doubly transitive.
\end{itemize}
\end{prop}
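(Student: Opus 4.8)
The plan is to analyze the intersection and compositum of the two splitting fields $L_f := K(\RR_f)$ and $L_h := K(\RR_h)$ inside $\bar{K}$, exploiting that $\Gal(L_h/K)\cong\Z/n\Z$ is a \emph{simple} group because $n$ is prime. First I would note that $L_f\cap L_h$ is an intermediate field of the Galois extension $L_h/K$, so by the Galois correspondence it corresponds to a subgroup of $\Gal(L_h/K)\cong\Z/n\Z$. Since $n$ is prime, the only subgroups are trivial or the whole group, so $L_f\cap L_h$ equals either $K$ or all of $L_h$. This dichotomy produces the two alternatives, and since $L_h\ne K$ (because $[L_h:K]=n\ge 3$) the two possibilities are mutually exclusive; together with exhaustiveness this yields the ``precisely one'' assertion.

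In the first case $L_f\cap L_h=K$. Because $L_h/K$ is Galois, the standard formula $[L_f L_h : L_f]=[L_h : L_f\cap L_h]$ shows that $L_f$ and $L_h$ are linearly disjoint over $K$ precisely when $L_f\cap L_h=K$, which is exactly alternative (i). In the second case $L_f\cap L_h=L_h$, that is $L_h\subseteq L_f$, which is the first assertion of (ii); the Galois correspondence then makes $\Gal(L_f/L_h)$ a normal subgroup of $\Gal(f/K)=\Gal(L_f/K)$ with quotient $\Gal(L_h/K)\cong\Z/n\Z$, giving the second assertion of (ii).

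The main work, and the step I expect to be the real obstacle, is the last claim of (ii): that $G:=\Gal(f/K)$, viewed as a transitive permutation group on the $n$-element set $\RR_f$, is not doubly transitive. Here I would argue purely group-theoretically. Since $G$ is a transitive subgroup of $\Perm(\RR_f)\cong\Sn$ with $n$ prime, $n$ divides $|G|$ while $n^2$ does not (as $n^2\nmid n!$), so the $n$-part of $|G|$ is exactly $n$. Let $N\triangleleft G$ be the index-$n$ normal subgroup with $G/N\cong\Z/n\Z$ supplied by case (ii); comparing $n$-parts in $|G|=n\,|N|$ forces $n\nmid|N|$. On the other hand, the orbits of a normal subgroup of a transitive group are permuted transitively by $G$, hence have equal size dividing the degree $n$, so that size is $1$ or $n$; size $n$ would make $N$ transitive and force $n\mid|N|$, a contradiction, so $N$ fixes every root and, the action being faithful, $N=\{1\}$. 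Thus $G\cong\Z/n\Z$ has order $n$, which for $n\ge 3$ is strictly less than $n(n-1)$, the minimal order of a doubly transitive group of degree $n$; hence $G$ is not doubly transitive.

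I would close by recording that the two cases are exhaustive (the intersection is $K$ or $L_h$) and mutually exclusive (these differ since $L_h\ne K$), so exactly one of (i), (ii) holds. The only delicate point is the permutation-group step, and the elementary fact that makes it go through is that the $n$-part of $|\Sn|$ equals $n$ when $n$ is prime; everything else is the Galois correspondence together with the block structure of normal subgroups of transitive groups.
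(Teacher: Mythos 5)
Your proof is correct and takes essentially the same route as the paper's: primality of $n$ plus the Galois correspondence gives the dichotomy, and in case (ii) both arguments show that the kernel $N$ of the surjection $\Gal(f/K)\twoheadrightarrow \Z/n\Z$ is trivial (using that $n^2$ does not divide $n!$), so that $\Gal(f/K)\cong\Z/n\Z$ has order $n<n(n-1)$ and hence cannot be doubly transitive. The only difference is cosmetic: you kill $N$ by analyzing its orbit sizes, all equal and dividing the prime $n$ but not equal to $n$ since $n$ does not divide $|N|$, whereas the paper shows every point stabilizer coincides with $N$; both hinge on the same order count.
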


\begin{cor}
\label{S3A3}
Let $K$ be a field of characteristic different from
$2$. Let $f(x), h(x)\in K[x]$ be
irreducible cubic polynomials without repeated roots such that 
$$\Gal(f/K)=\mathbf{S}_3, \ \Gal(h/K)=\mathbf{A}_3.$$

If $C_f$ and $C_h$ are  isogenous over $\bar{K}$ then
$p=\fchar(K)$ is a prime that is congruent to $2$ modulo $3$, and both $C_f$ and $C_h$ are supersingular elliptic curves. 
\end{cor}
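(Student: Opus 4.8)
The plan is to feed the corollary into Proposition \ref{BourbakiLang} and Theorem \ref{main}, and then to dispose of characteristic $3$ by hand. Since $\mathbf{A}_3$ is the cyclic group $\Z/3\Z$, the polynomial $h(x)$ is an irreducible cubic with cyclic Galois group of order $3$, so Proposition \ref{BourbakiLang} applies with $n=3$. Its alternative (ii) asserts in particular that $\Gal(f/K)$ is \emph{not} doubly transitive; but $\Gal(f/K)=\mathbf{S}_3=\Perm(\RR_f)$ is the full symmetric group on the three roots, hence doubly (indeed triply) transitive. (Equivalently, $\Z/3\Z$ is not a quotient of $\mathbf{S}_3$.) Thus (ii) is impossible, and alternative (i) must hold: the splitting fields $K(\RR_f)$ and $K(\RR_h)$ are linearly disjoint over $K$.

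Linear disjointness is exactly the missing hypothesis of Theorem \ref{main}: $f(x)$ and $h(x)$ are cubics without repeated roots whose splitting fields are linearly disjoint, $h(x)$ is irreducible, and $\Gal(f/K)=\mathbf{S}_3$. Applying Theorem \ref{main} to the assumed $\bar K$-isogeny between $C_f$ and $C_h$ yields immediately that $p=\fchar(K)$ is a prime with $p\not\equiv 1\pmod 3$ and that both $C_f$ and $C_h$ are supersingular. It therefore remains only to exclude $p=3$, the one case separating this from the sharper claim $p\equiv 2\pmod 3$.

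To treat $p=3$ I would argue directly from the shape of the cubics. In characteristic $3$ a supersingular elliptic curve has $j=0$, and a short computation with the standard formulas shows that $j=0$ forces the $x^2$-coefficient of each defining cubic to vanish; so, after an admissible normalization, $f(x)=x^3+Bx+C$ and $h(x)=x^3+B'x+C'$ with $B,B'\neq 0$. In characteristic $3$ the discriminants are $-B^3$ and $-B'^3$, so $\Gal(f/K)=\mathbf{S}_3$ says $-B$ is a nonsquare in $K$ while $\Gal(h/K)=\mathbf{A}_3$ says $-B'$ is a nonzero square. Equal $j$-invariants give a $\bar K$-isomorphism $C_f\cong C_h$, which must carry $2$-torsion to $2$-torsion and hence send the root-set of $f$ to that of $h$ by a single affine map $\beta=u^{2}\alpha+r$; matching leading coefficients gives $u^{4}=B'/B\in K$, and then $K(\sqrt{B'/B})=K(\sqrt{-B})$, the quadratic subfield of the $\mathbf{S}_3$-extension $K(\RR_f)$.

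The hard part is precisely this final step: turning the affine relation $\beta=u^2\alpha+r$ into a genuine contradiction with the mismatched square classes of $-B$ and $-B'$. The delicacy is that the isomorphism need not be defined over $K$ — typically $u^2=\sqrt{B'/B}$ lies only in the quadratic extension $K(\sqrt{-B})$ — so one cannot compare square classes in $K$ naively. The strategy I would pursue is to exploit the Galois action of $\Gal\bigl(K(\RR_f)\,K(\RR_h)/K\bigr)\cong\mathbf{S}_3\times\Z/3\Z$ (available by linear disjointness) on the displayed relations, tracking how $u^2$ transforms under the quadratic character of $\mathbf{S}_3$ while the $\beta$'s are permuted only by $\mathbf{A}_3$; the characteristic-$3$ degeneration of the reference curve $y^2=x^3-1$ is the structural reason this case must be isolated at all. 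This is the step I expect to be the main obstacle, and the place where the hypothesis $\Gal(h/K)=\mathbf{A}_3$ (rather than merely the irreducibility of $h$) must be used in an essential way.
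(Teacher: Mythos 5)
Your reduction to Proposition \ref{BourbakiLang} and Theorem \ref{main} is exactly the paper's proof of Corollary \ref{S3A3}, and you are right that it only yields ``$p$ is a prime not congruent to $1$ modulo $3$'': the paper's own argument stops there, silently passing over the case $p=3$. So you have correctly isolated a genuine discrepancy between the corollary as stated and the theorem it invokes.

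However, the third step you sketch --- ruling out $p=3$ --- is not merely ``the main obstacle''; it is impossible, because $p=3$ actually occurs. Take $K=\F_3(t)$, $f(x)=x^3+tx+1$, $h(x)=x^3-x-t$. Both are separable and irreducible over $K$ (each is of degree $1$ in $t$ with coprime coefficients in $\F_3[x]$, so Gauss's lemma applies). In characteristic $3$ the discriminant of $x^3+Bx+C$ is $-B^3$; since $-t$ is a nonsquare in $\F_3(t)$, we get $\Gal(f/K)=\mathbf{S}_3$, while $h$ is an Artin--Schreier polynomial, so $\Gal(h/K)\cong\Z/3\Z=\mathbf{A}_3$. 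Both cubics lack an $x^2$-term, so (the Hasse invariant being the coefficient of $x^{p-1}$ in $f^{(p-1)/2}$, here the $x^2$-coefficient) both curves are supersingular with $c_4=0$, hence $j(C_f)=j(C_h)=0$, hence $C_f\cong C_h$ over $\bar{K}$ --- in particular they are isogenous. This also pinpoints where your square-class argument dies, as you half-suspected: the scaling factor $u$ with $u^{4}=B/B'$ lies only in $\bar{K}$, so the mismatched square classes of $-B$ and $-B'$ in $K^{*}$ impose no constraint, and no amount of Galois-equivariance bookkeeping can resurrect one. The correct conclusion of Corollary \ref{S3A3} is therefore the one your first two steps already establish: $p$ is a prime with $p\not\equiv 1\pmod{3}$ (i.e., $p=3$ or $p\equiv 2\pmod{3}$), and both curves are supersingular. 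Note that the general statements (Theorem \ref{mainH} and Corollary \ref{S3A3G}) are worded carefully --- evenness of the order of $p\bmod n$ is asserted only when $p\ne n$ --- so the ``congruent to $2$ modulo $3$'' phrasing of Corollary \ref{S3A3} overstates both what is proved and what is true; your write-up should state the weakened conclusion rather than try to close an unclosable gap.
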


\begin{proof}[Proof of Corollary \ref{S3A3} (modulo Theorem \ref{main})]
Since  the order of the group $\mathbf{A}_3$ is $3$, the degree
$[K(\RR_h):K]=3$. 
Notice that $n=3$ is a prime, the permutation  group $\Gal(\RR_f)=\mathbf{S}_3$ is {\sl doubly transitive} and $\Gal(h)=\mathbf{A}_3\cong \Z/3\Z$.
Applying Proposition \ref{BourbakiLang}, we conclude that $K(\RR_f)$ and $K(\RR_h)$ are linearly disjoint over $K$. Now the desired result follows from Theorem \ref{main}.
\end{proof}

\begin{cor}
\label{overQ}
Suppose that $K=\Q$ and $h(x)\in \Q[x]$ is an irreducible cubic polynomial such that $\Q(\RR_h)$ is a cyclic cubic field,
i.e., $\Gal(h/\Q)=\mathbf{A}_3$.   Then the elliptic curve $C_h$ enjoys the following properties.

\begin{itemize}
\item[(i)]
$C_h$ is not isogenous  to $y^2=x^3-1$ over $\bar{\Q}$, i.e., $\End^0(C_h)$ is not isomorphic to $\Q(\sqrt{-3})$.
In other words,
$$j(C_h)\ne 0,\ 2^4 3^3 5^3, \ -2^{15}3\cdot 5^3.$$
\item[(ii)]
Let $u(x)$ be an an irreducible cubic polynomial such that $K(\RR_u)$ is a cyclic cubic field,
i.e., $\Gal(u/\Q)=\mathbf{A}_3$.   If the cubic fields  $\Q(\RR_h)$ and $\Q(\RR_u)$  are not isomorphic then
the elliptic curves $C_h$ and $C_u$ are not isogenous over $\bar{\Q}$.
\end{itemize}
\end{cor}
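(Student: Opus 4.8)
The plan is to deduce both parts from a single statement about the mod-$2$ representation: \emph{no elliptic curve over $\Q$ with complex multiplication has its $2$-division field equal to a cyclic cubic field}. Writing $\rho_2\colon\Gal(\Q)\to\Aut(E[2])\cong\GL_2(\F_2)\cong\mathbf{S}_3$ for the action on the $2$-torsion, this says: if $\rho_2(\Gal(\Q))=\mathbf{A}_3$ (the unique order-$3$ subgroup, i.e.\ the nonsplit Cartan, acting on $E[2]\setminus\{0\}$ as a $3$-cycle), then $\End^0(E)=\Q$. For $E=C_h$ the $2$-division field is the splitting field $\Q(\RR_h)$ and the image of $\rho_2$ is $\Gal(h/\Q)$, so the hypothesis $\Gal(h/\Q)=\mathbf{A}_3$ is exactly the condition $\rho_2(\Gal(\Q))=\mathbf{A}_3$. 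I will refer to the displayed statement as the key fact.

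Granting the key fact, part (i) is immediate: $\Gal(h/\Q)=\mathbf{A}_3$ forces $\End^0(C_h)=\Q$, so $\End^0(C_h)$ is not isomorphic to $\Q(\sqrt{-3})$; since over $\Q$ the condition $\End^0(C)\cong\Q(\sqrt{-3})$ is equivalent both to $j(C)\in S=\{0,\,2^4 3^3 5^3,\,-2^{15}3\cdot 5^3\}$ and to $C$ being isogenous over $\bar{\Q}$ to $y^2=x^3-1$ (as recalled in the proof of Corollary \ref{rational}), we conclude $j(C_h)\notin S$ and that $C_h$ is not isogenous over $\bar{\Q}$ to $y^2=x^3-1$. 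Alternatively, part (i) can be obtained without the key fact from Corollary \ref{S3A3} applied to the pair $(x^3-2,\,h)$: the curve $y^2=x^3-2$ is $\bar{\Q}$-isomorphic to $y^2=x^3-1$ because both have $j=0$, while $\Gal(x^3-2/\Q)=\mathbf{S}_3$ and $\Gal(h/\Q)=\mathbf{A}_3$, so in characteristic $0$ these two curves are not isogenous over $\bar{\Q}$.

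For part (ii) the key fact gives $\End^0(C_h)=\End^0(C_u)=\Q$; in particular both curves are non-CM. Suppose they are isogenous over $\bar{\Q}$. For non-CM curves $\Hom(C_h,C_u)$ has rank $1$, so $\Gal(\Q)$ acts on it through a quadratic character cutting out a field $\Q(\sqrt d)$; descending the isogeny through the twisting isomorphism of $C_u$ shows that $C_h$ is isogenous over $\Q$ to the quadratic twist $C_u^{(d)}$. Quadratic twisting does not change the mod-$2$ representation (a character into $\{\pm1\}$ is trivial modulo $2$), so $\rho_2(C_u^{(d)})\cong\rho_2(C_u)$. On the other hand a $\Q$-isogeny induces a $\Q_\ell$-isomorphism of rational Tate modules, so $C_h$ and $C_u^{(d)}$ have equal trace of Frobenius at every prime of good reduction; reducing modulo $2$ and applying Brauer--Nesbitt together with Chebotarev, the semisimplified mod-$2$ representations of $C_h$ and $C_u^{(d)}$ are isomorphic, and both are irreducible since $\mathbf{A}_3$ acts transitively on $E[2]\setminus\{0\}$ with no invariant line. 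Hence $\rho_2(C_h)\cong\rho_2(C_u)$, so $\ker\rho_2$ agrees and $\Q(\RR_h)=\Q(\RR_u)$, contradicting the assumption that these cyclic cubic fields are non-isomorphic. (One may first apply Proposition \ref{BourbakiLang} to the cyclic cubic $h$ to upgrade ``non-isomorphic'' to ``linearly disjoint'', but this is not needed for the argument above.)

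The main obstacle is the key fact itself, and the delicate point is that it lives at the prime $2$, where the usual CM argument degenerates: the determinant of $\rho_2$ is the mod-$2$ cyclotomic character, which is trivial, so the order of complex conjugation cannot be read off from the determinant. Instead I would argue with the reduction $\iota\colon\End(E)/2\End(E)\hookrightarrow\End_{\F_2}(E[2])=\Mat_2(\F_2)$, which is injective because an endomorphism kills $E[2]$ if and only if it lies in $2\End(E)$. If $\End^0(E)$ is imaginary quadratic with order $\O=\End(E)$, complex conjugation $c$ satisfies $\rho_2(c)\,\iota(\alpha)\,\rho_2(c)^{-1}=\iota(\bar\alpha)$ for all $\alpha\in\O$, and $\O/2\O$ is a $2$-dimensional commutative $\F_2$-algebra, hence one of $\F_4$, $\F_2\times\F_2$, or $\F_2[\epsilon]/(\epsilon^2)$. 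In the first two cases conjugation acts nontrivially on $\O/2\O$, so by injectivity $\rho_2(c)\neq 1$; being an involution it is a transposition in $\mathbf{S}_3$, whence the image is not contained in $\mathbf{A}_3$. In the third case conjugation is trivial on $\O/2\O$, so every $\rho_2(\sigma)$ centralizes $\iota(\O/2\O)$ and in particular commutes with the nonzero nilpotent $\iota(\epsilon)$; the line $\mathrm{im}\,\iota(\epsilon)$ is then $\Gal(\Q)$-stable, producing a rational $2$-torsion point, so the image lies in a point-stabilizer of order $2$ and again is not $\mathbf{A}_3$. Thus $\rho_2(\Gal(\Q))\neq\mathbf{A}_3$ in every CM case, which establishes the key fact and completes both parts.
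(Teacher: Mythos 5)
Your proposal is correct, and it takes a genuinely different route from the paper's. The paper proves (i) by comparing $C_h$ against the auxiliary curve $y^2=x^3-5$ (whose Galois group is $\mathbf{S}_3$) via Corollary \ref{S3A3} --- hence via Proposition \ref{BourbakiLang} and Theorem \ref{main} --- together with the isogeny-invariance of endomorphism algebras; it proves (ii) by base-changing to $K_1=\Q(\RR_h)$, over which $u(x)$ stays irreducible (linear disjointness, again Proposition \ref{BourbakiLang}) while $h(x)$ splits, and then applying Theorem \ref{endo} over $K_1$ to reduce to (i). You instead isolate a self-contained ``key fact'' --- no CM elliptic curve over $\Q$ has mod-$2$ image $\mathbf{A}_3$ --- proved by playing complex conjugation against the injection $\End(E)/2\End(E)\hookrightarrow \Mat_2(\F_2)$ and the three possibilities for $\O/2\O$; this bypasses Theorems \ref{endo} and \ref{main} entirely. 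Your case analysis is right: conjugation is nontrivial on $\O/2\O$ exactly in the \'etale cases $\F_4$ and $\F_2\times\F_2$ (the trace of a generator is odd), and trivial in the nilpotent case, where the stable line gives a rational point of order $2$; the injectivity claim is the standard factorization of any endomorphism killing $E[2]$ through multiplication by $2$; and the needed input that conjugation acts on the CM order as $\alpha\mapsto\bar{\alpha}$ is standard. For (ii) your replacement of the paper's reduction also works: descend the hypothetical $\bar{\Q}$-isogeny to a $\Q$-isogeny onto a quadratic twist of $C_u$ (legitimate since both curves are non-CM, so $\Hom$ has rank one and $j\neq 0,1728$), note that quadratic twisting does not change the mod-$2$ representation, and then equality of Frobenius traces plus Chebotarev and Brauer--Nesbitt (both representations being irreducible with image $\mathbf{A}_3$) forces $\Q(\RR_h)=\Q(\RR_u)$, a contradiction. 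As for what each approach buys: the paper's argument stays within machinery valid over arbitrary fields of characteristic different from $2$ and generalizing to hyperelliptic jacobians, whereas yours is special to elliptic curves over number fields (complex conjugation, Frobenius elements) but is independent of the paper's main theorems and yields a strictly stronger conclusion in (i), namely $\End^0(C_h)=\Q$ (i.e., $C_h$ is non-CM), rather than merely $\End^0(C_h)\not\cong\Q(\sqrt{-3})$.
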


\begin{proof}[Proof of Corollary \ref{overQ} (modulo Theorems \ref{main} and \ref{endo})]
In order to prove (i), 
let us put $$f(x)=x^3-5\in \Q[x].$$ We have seen (Example \ref{OsadaS}) that $\Gal(f/\Q)=\mathbf{S}_3$.
It follows from Corollary \ref{S3A3} that the elliptic curves $C_f:y^2=x^3-5$ and $C_{h}$
are {\sl not} isogenous over $\bar{\Q}$ and even over $\C$, thanks to Remark \ref{Chow}. 
Since $\End^0(C_f) \cong \Q(\sqrt{-3})$, it follows from \cite[Ch. 4, Sect. 4.4, Prop. 4.9]{ShimuraIA} combined with  Remark \ref{Chow}. 
that $\End^0(C_h)$ is {\sl not} isomorphic to $\Q(\sqrt{-3})$.  On the other hand, it is well known (see the proof of Corollary  \ref{rational})
that if $C$ is an elliptic curve over $\Q$ then 
 $$j(C) \in \{0,\ 2^4 3^3 5^3, \ -2^{15}3\cdot 5^3\}$$ if and only if $\End^0(C)$ is isomorphic to $\Q(\sqrt{-3})$.
This ends the proof of  (i).

In order to prove (ii), notice that $\Q(\RR_h) \ne \Q(\RR_u)$ (they both are subfields of $\bar{\Q}$). Since they both have the same degree over $\Q$ (namely, $3$, which is a prime),
$\Q(\RR_h)$ does {\sl not} contain $\Q(\RR_u)$. Applying Proposition \ref{BourbakiLang}, we conclude that 
the fields  $\Q(\RR_h)$ and $\Q(\RR_u)$  
 are {\sl linearly disjoint} over $\Q$. The linear disjointness implies 
 that  $u(x)$ remains irreducible  over
 $K_1=\Q(\RR_h)$ while $h(x)$  is reducible (actually splits into a product of linear factors) over $K_1$. Notice that $\bar{\Q}$ is an algebraic closure of $K_1$.
 
 Applying Theorem \ref{endo}  to irreducible $u(x)$ and reducible $h(x)$ over $K_1$, we conclude that if $C_{h}$ and $C_{u}$ are isogenous over $\bar{\Q}$ then $C_{h}$ is isogenous over $\bar{\Q}$  to $y^2=x^3-1$, which is not the case, in light of already proven (i). Hence, 
 $C_{h}$ and $C_{u}$ are {\sl not} isogenous over $\bar{\Q}$.
\end{proof}

\begin{ex}
\label{simpleCubic}
\begin{itemize}
\item[(i)]
Let  us put
$$K=\Q, \ a  \in \Z, \  \ h_a(z):=x^3-ax^2-(a+3)x-1 \in \Q[x].$$
It is known \cite[p. 1137--1138]{Shanks} that  for every integer $a$ the splitting field  $\Q(\RR_{h_a})$ of the cubic polynomial 
$h_a(z)$
 is a {\sl cyclic} cubic field, i.e. $\Gal(h_a/\Q) =\mathbf{A}_3$. 
 If $f(x) \in \Q[x]$ is any irreducible cubic polynomial with $\Gal(f)=\mathbf{S}_3$ (e.g., $f(x)=x^3-x-1$ or $x^3-5$) then it follows from Corollary \ref{S3A3} that the elliptic curves $C_f$ and $C_{h_a}: y^2=h_a(x)$ are {\sl not} isogenous 
  over $\bar{\Q}$ (and even over $\C$) for all $a \in \Z$.  
  \item[(ii)]
  Suppose that  $a \ge -1$ and $a^2+3a+9$ is a {\sl prime}, e.g.,
  $$a=-1, 1, 2,  4, 7, 8, 10, 11, 16, 17,  \dots,    410$$ \cite[Table 1 on p. 1140]{Shanks}.  Then the discriminant of $\Q(\RR_{h_a})$ is $(a^2+3a+9)^2$ \cite[p. 1138]{Shanks}.  This implies that if $b$ is an integer such that $b > a$
   and $b^2+3b+9$ is also a prime then 
    $(b^2+3b+9)^2$ is the discriminant of  $\Q(\RR_{h_b})$ and
    $$(a^2+3a+9)^2 < (b^2+3b+9)^2.$$
    Hence, the cubic fields $\Q(\RR_{h_a})$ and 
 $\Q(\RR_{h_b})$ have {\sl distinct} discriminants and therefore are {\sl not} isomorphic.  
 In light of Corollary \ref{overQ}, the elliptic curves $C_{h_a}$ and $C_{h_b}$ are  {\sl not} isogenous over $\bar{\Q}$.
  \end{itemize}
\end{ex}
We deduce Theorems \ref{endo} and \ref{main} from  more general results about non-isogenous hyperelliptic jacobians 
(Theorems \ref{endoH} and \ref{mainH} below)
that will be stated in Subsection \ref{hyperJ}
and proven in Section \ref{mainproof}.

\subsection{Non-isogenous hyperelliptic jacobians}
\label{hyperJ}

Throughout this subsection, $n\ge 3$ is an odd integer, $f(x)$ and $h(x)$ are degree $n$ polynomials with coefficients in $K$ and without repeated roots,
$$C_f: y^2=f(x), \ C_h: y^2=h(x)$$
are the corresponding genus $(n-1)/2$ hyperelliptic curves over $K$, whose jacobians we denote by $J(C_f)$ and $J(C_h)$, respectively. These jacobians are $(n-1)/2$-dimensional abelian varieties defined over $K$.

\begin{thm}
\label{endoH}
Suppose that $n\ge 3$ is an odd prime such that $2$ is a primitive root $\bmod\ n$.
Let $K$ be a field of characteristic different from
$2$. 
Let $f(x), h(x) \in K[x]$ be degree $n$ polynomials without repeated roots.  Suppose that one of the
polynomials is irreducible and the other reducible.

If the abelian varieties $J(C_f)$ and $J(C_h)$ are  isogenous over $\bar{K}$ then both jacobians
are abelian varieties of CM type over $\bar{K}$ with multiplication by  the $n$th cyclotomic field $\Q(\zeta_n)$.
\end{thm}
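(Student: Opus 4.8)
The plan is to treat the isogeny as a statement about $2$-torsion viewed as a Galois module, and to win by a dimension count on Jordan--Hölder factors. By the symmetry $\Hom(X,Y)=0\iff\Hom(Y,X)=0$ recorded in the introduction, I may assume that $f$ is irreducible and $h$ reducible, and I aim to prove that $J(C_f)$ is of CM type by $\Q(\zeta_n)$; the isomorphism $\End^0(J(C_f))\cong\End^0(J(C_h))$ furnished by the isogeny then transports the conclusion to $J(C_h)$. Throughout I use the standard description of the $2$-torsion of $C_f\colon y^2=f(x)$ with $\deg f=n$ odd: the group $J(C_f)[2]$ is canonically identified with the ``heart'' $Q_f:=(\F_2^{\RR_f})^0$ of sum-zero functions on the root set, an $\F_2[\Gal(K)]$-module of dimension $n-1=2\dim J(C_f)$ on which $\Gal(K)$ acts through $\Gal(f)\subseteq\Perm(\RR_f)$; likewise $Q_h\cong J(C_h)[2]$.

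The first technical input is a pair of module-theoretic facts. \emph{(A) $Q_f$ is irreducible.} Since $f$ is irreducible and $n$ is prime, $\Gal(f)$ is a transitive subgroup of $\Perm(\RR_f)$ and hence contains an $n$-cycle, generating a cyclic $\Z/n\Z$ acting transitively on $\RR_f$; because $2$ is a primitive root modulo $n$, the cyclotomic polynomial $\Phi_n$ is irreducible over $\F_2$, so the only simple $\F_2[\Z/n\Z]$-modules are the trivial one and $\F_{2^{n-1}}$, and the restriction $Q_f|_{\Z/n\Z}$ is exactly the augmentation ideal, i.e. $\cong\F_{2^{n-1}}$, which is already simple. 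A fortiori $Q_f$ is a simple $\F_2[\Gal(K)]$-module of dimension $n-1$. \emph{(B) Every composition factor of $Q_h$ has dimension at most $n-2$.} As $h$ is reducible, $\RR_h$ is a disjoint union of at least two $\Gal(K)$-orbits, each of size at most $n-1$, so $\F_2^{\RR_h}$ is a direct sum of permutation modules on $\le n-1$ points; each such permutation module has a trivial submodule and hence all of its composition factors have dimension $\le n-2$, and the same therefore holds for the submodule $Q_h$. Combining (A) and (B): $Q_f$ is \emph{not} isomorphic to any $\Gal(K)$-subquotient of $Q_h$.

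The heart of the argument is the passage from the geometric isogeny to these modules. By Chow's theorem (Remark \ref{Chow}) the $\bar K$-isogeny is already defined over $K_s$, so reduction of homomorphisms modulo $2$ yields a $\Gal(K)$-equivariant map $\Hom_{K_s}(J(C_f),J(C_h))\otimes\F_2\to\Hom_{\F_2}(Q_f,Q_h)$, with $\Gal(K)$ acting by conjugation on the target; it is injective because a homomorphism killing $J(C_f)[2]$ is divisible by $2$. From the nonvanishing of the source and the simplicity of $Q_f$ I then intend to invoke the author's machinery on homomorphisms of abelian varieties and simple $2$-torsion modules to obtain the dichotomy: \emph{either} $Q_f$ occurs as a Jordan--Hölder factor of $Q_h$, \emph{or} $J(C_f)$ is of CM type. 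The first alternative is excluded by (A)--(B), leaving CM. I expect this dichotomy to be the \textbf{main obstacle}: one must control $\Hom_{K_s}(J(C_f),J(C_h))$ as a $\Gal(K)$-module and, via a Clifford-type analysis of the restriction to the open subgroup cutting out the field of definition of the isogeny, show that the failure of the ``subquotient'' alternative cannot be explained merely by a large centralizer of $Q_f$ (the extra symmetry available already when $\Gal(f)$ is cyclic, where no CM need occur) but forces genuinely new endomorphisms. Separating these two sources of symmetry, using the simplicity of $Q_f$ and the primitive-root hypothesis in an essential way, is the delicate point.

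Finally I would identify the CM field. Once $J(C_f)$ is of CM type, the endomorphism algebra $E:=\End^0(J(C_f))$ is a field with $[E:\Q]=2\dim J(C_f)=n-1$; since $Q_f$ is a simple $\F_2[\Gal(K)]$-module with $\End_{\F_2[\Z/n\Z]}(Q_f)=\F_{2^{n-1}}$ and $2$ is inert in $\Q(\zeta_n)$ (again because $2$ is a primitive root modulo $n$), the only possibility compatible with the reduction of $E$ acting on $Q_f$ is $E\cong\Q(\zeta_n)$, so $J(C_f)$ is of CM type with multiplication by $\Q(\zeta_n)$. Transporting along the isogeny gives $\Q(\zeta_n)\hookrightarrow\End^0(J(C_h))$ with $[\Q(\zeta_n):\Q]=2\dim J(C_h)$, whence $J(C_h)$ is of CM type by $\Q(\zeta_n)$ as well, completing the proof.
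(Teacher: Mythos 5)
Your preliminary reductions are sound and parallel the paper's: fact (A) is exactly Proposition \ref{crucial}(1) (via Lemma \ref{simpleX2}), and fact (B) is a correct module-theoretic sharpening of what the paper extracts from reducibility of $h$ (the paper instead uses Lemma \ref{degSplit}, on the level of field degrees). But there is a genuine gap, and it sits precisely at what you yourself flag as the ``main obstacle'': the dichotomy \emph{either $Q_f$ occurs as a Jordan--H\"older factor of $Q_h$, or $J(C_f)$ is of CM type by $\Q(\zeta_n)$} is not a black box you can invoke --- it \emph{is} the theorem, and no result of this form is available in the paper or its references. The reason your reduction-mod-$2$ map cannot produce it by itself is that a nonzero element of $\Hom_{K_s}(J(C_f),J(C_h))\otimes\F_2$ yields a nonzero $\F_2$-linear map $Q_f\to Q_h$ that is \emph{not} $\Gal(K)$-equivariant unless the underlying homomorphism is defined over $K$; it is equivariant only for the open subgroup fixing it. So simplicity of $Q_f$ and the composition factors of $Q_h$ say nothing directly. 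Controlling exactly this failure of equivariance is what the paper's Proposition \ref{isogEll} does: after base changes (legitimized by Lemma \ref{degSplit} and linear disjointness) one arranges $K(Y[2])=K(Y[4])=K$ and $K(X[4])=K(X[2])$ cyclic of degree $n$ over $K$ (using Remark \ref{silver} on the exponent-$2$ group $\Gal(K(X[4])/K(X[2]))$ and Silverberg's theorem), takes an isogeny $\phi$ of minimal degree, shows $\phi$ is not defined over $K$ by your equivariance argument, and then converts the cocycle $\sigma(\phi)=a_\sigma\phi$ into a nontrivial group homomorphism $\Gal(K(X[4])/K)\to\End^0(Y)^{*}$, producing an element $c$ with $c^n=1$, $c\neq 1$; the identification $\Q[c]\cong\Q(\zeta_n)$ then follows from $\Q[T]/(T^n-1)\cong\Q(\zeta_n)\oplus\Q$ and the bound $\dim_\Q\Q[c]\le 2\dim Y=n-1$. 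None of this ``conjugation'' mechanism appears in your proposal, and without it the CM alternative never materializes.

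A secondary problem is your last paragraph. ``$J(C_f)$ of CM type'' does not make $E=\End^0(J(C_f))$ a field (in positive characteristic it can be a matrix algebra over a quaternion algebra, as in the supersingular case of Theorem \ref{mainH}), and the claim that ``the reduction of $E$ acting on $Q_f$'' forces $E\cong\Q(\zeta_n)$ is not valid as stated: the endomorphisms in question are generally not defined over $K$, so they need not commute with the $\Gal(K)$-action on $Q_f$, and inertness of $2$ in $\Q(\zeta_n)$ gives no constraint on an abstract CM field of degree $n-1$. In the paper the field $\Q(\zeta_n)$ is pinned down not by reduction mod $2$ but by the multiplicative order of the element $c$ constructed above; that is the step you would need to supply.
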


\begin{rem}
See \cite[Th. 1.1]{Pip} where the possible structure of the endomorphism algebra $\End^0(J(C_f))$ is described when $K$ is a number field,
$f(x)$ is a prime (odd) degree $n$ irreducible polynomial over $K$ and $2$ is a primitive root $\bmod \ n$.
(See also \cite{ZarhinMRL,ZarhinTexel}.)
\end{rem}

\begin{thm}
\label{mainH}
Suppose that $n\ge 3$ is an odd prime such that $2$ is a primitive root $\bmod \ n$.
Let $K$ be a field of characteristic different from
$2$. 
Suppose that $f(x), h(x) \in K[x]$ are degree $n$ polynomials without repeated roots that enjoy the following properties.

\begin{itemize}
\item[(i)]
$f(x)$ is irreducible over $K$ and its Galois group $\Gal(f) \subset \Perm(\RR_f)$ is doubly transitive.
\item[(ii)]
$h(x)$ is irreducible over $K$.
\item[(iii)]
The splitting fields $K(\RR_f)$ and $K(\RR_h)$ of $f(x)$ and $h(x)$ are linearly disjoint over $K$.
\end{itemize}
Then the hyperelliptic jacobians $J(C_f)$ and $J(C_h)$ enjoy precisely one of the following properties.
\begin{enumerate}
\item[(1)]
The abelian varieties $J(C_f)$ and $J(C_h)$ are not isogenous over $\bar{K}$. 
Even better,
$$\Hom(J(C_f),J(C_h))=\{0\}, \ \Hom(J(C_h),J(C_f))=\{0\}.$$
\item[(2)]
$p=\fchar(K)>0$ and  both $J(C_f)$ and $J(C_h)$ are  supersingular abelian varieties.
In addition,  $n$ does not divide $p-1$.
More precisely, if  $p \ne n$ and $\mathfrak{f}_p$ is the order of
$p \bmod  n$ in the multiplicative group $(\Z/n\Z)^{*}$ then $\mathfrak{f}_p$ is even.

\end{enumerate}
\end{thm}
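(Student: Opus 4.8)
The plan is to deduce Theorem \ref{mainH} from Theorem \ref{endoH} by a base change, and then to convert the resulting CM structure into the desired dichotomy. First I would replace $K$ by $L=K(\RR_h)$: over $L$ the polynomial $h$ splits into linear factors (so it becomes reducible), while $f$ stays irreducible because linear disjointness of $K(\RR_f)$ and $K(\RR_h)$ gives $\Gal(L(\RR_f)/L)\cong\Gal(f/K)$, which remains doubly transitive; moreover $\bar L=\bar K$, so the isogeny question is unchanged. Applying Theorem \ref{endoH} over $L$ to the irreducible $f$ and the reducible $h$, I obtain that \emph{if} $J(C_f)$ and $J(C_h)$ are isogenous over $\bar K$ then both are of CM type over $\bar K$ with multiplication by $\Q(\zeta_n)$. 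Since irreducibility of $f$ forces $J(C_f)[2]$ to be an irreducible $\Gal(K)$-module (see the next paragraph), $J(C_f)$ is simple whenever it is not supersingular, and then $\Hom(J(C_f),J(C_h))\ne 0$ already implies isogeny; combined with $\Hom(X,Y)=0\iff\Hom(Y,X)=0$, this means that outside the supersingular case alternative (1) is equivalent to non-isogeny. So from now on I assume the isogeny and aim to produce alternative (2).

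Next I would use that $2$ is a primitive root $\bmod\ n$: then $2$ is inert in $\Q(\zeta_n)$, so $\Z[\zeta_n]\otimes\Z_2$ is a discrete valuation ring with residue field $\F_{2^{n-1}}$. The module $J(C_f)[2]$ is the $(n-1)$-dimensional heart of the permutation module on $\RR_f$, on which $\Gal(K)$ acts through $\Gal(f)$; the CM endows it with an $\F_{2^{n-1}}=\Z[\zeta_n]/2$-structure. The action of $\Gal(K)$ on $\Q(\zeta_n)\subset\End^0(J(C_f))$ yields a character $\bar\chi_f\colon\Gal(K)\to(\Z/n\Z)^{*}$, and similarly $\bar\chi_h$. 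The key point is that $\bar\chi_f$ is \emph{nontrivial}: were it trivial the CM would be rational, so $\Z[\zeta_n]/2=\F_{2^{n-1}}$ would centralize the image of $\Gal(f)$ on $J(C_f)[2]$; but for a doubly transitive $\Gal(f)$ this heart is absolutely irreducible, with centralizer only $\F_2$, and $n-1\ge 2$ gives a contradiction. One checks further that the semilinear twist of the $\F_{2^{n-1}}$-action identifies $\bar\chi_f$ with a character factoring through $\Gal(f)$ (its fixed field lies in $K(\RR_f)$), and likewise $\bar\chi_h$ factors through $\Gal(h)$.

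Now I would play the two characters against linear disjointness. An isogeny intertwines the two CM structures and forces $\bar\chi_f=\bar\chi_h$ as characters of $\Gal(K)$. But linear disjointness gives a surjection $\Gal(K)\twoheadrightarrow\Gal(f)\times\Gal(h)$, and a character that factors both through the projection to $\Gal(f)$ and through the projection to $\Gal(h)$ must be trivial. This contradicts the nontriviality of $\bar\chi_f$ — \emph{provided the CM is a genuine, abelian-image CM structure}, which is automatic when $\fchar(K)=0$. Hence in characteristic $0$ the jacobians are not isogenous and $\Hom$ vanishes, which is alternative (1) (and recovers the characteristic-zero non-isogeny noted after Theorem \ref{main}). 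Therefore an isogeny forces $p=\fchar(K)>0$, and the only way the character argument can fail is that the $\Q(\zeta_n)$-action is not rigid in the above sense, i.e. the reduction is supersingular, where $\End^0$ is a quaternionic matrix algebra containing $\Q(\zeta_n)$ for unrelated reasons and the mod-$2$ centralizer is no longer controlled. So both $J(C_f)$ and $J(C_h)$ are supersingular.

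Finally, for the numerical conditions I would invoke the standard supersingular--CM criterion: both varieties acquire good supersingular reduction over a finite extension and carry an action of $\Q(\zeta_n)$, so for $p\ne n$ the Frobenius lies in $\Q(\zeta_n)$ with all slopes equal to $1/2$, which holds exactly when the primes of $\Q(\zeta_n)$ over $p$ have even residue degree, i.e. when $\mathfrak{f}_p=\ord_n(p)$ is even; in particular $\mathfrak{f}_p\ne 1$, so $p\not\equiv 1\pmod n$, that is $n\nmid p-1$ (the case $p=n$ being trivial). This is alternative (2). The main obstacle is precisely the passage between the alternatives in the third paragraph: turning the bare CM conclusion of Theorem \ref{endoH} into a genuine contradiction through the cyclotomic characters and linear disjointness, and correctly isolating supersingular reduction as the \emph{only} positive-characteristic escape from that contradiction. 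The crux is the contrast between the characteristic-$0$ rigidity of CM (forcing an abelian Galois image, hence the nontrivial $\bar\chi_f$) and the characteristic-$p$ degeneration, where the quaternionic endomorphism algebra of a supersingular variety makes the $\Q(\zeta_n)$-structure compatible with the doubly transitive $\Gal(f)$.
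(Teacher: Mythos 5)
Your opening reduction is sound and parallels the paper: after replacing $K$ by $K(\RR_h)$, the polynomial $h$ splits while $f$ stays irreducible with doubly transitive Galois group (by linear disjointness), and one may invoke the CM conclusion (the paper applies Proposition \ref{isogEll} directly at this point, which is the engine behind Theorem \ref{endoH}). Your final numerical step is also essentially the paper's Lemma \ref{divisionA}: the degree-$(n-1)$ field $\Q(\zeta_n)$ embeds into $\Mat_{(n-1)/2}(D_{p,\infty})$ only if it splits $D_{p,\infty}$, i.e.\ only if every place of $\Q(\zeta_n)$ above $p$ has even local degree, which for $p\ne n$ (unramified case) means exactly that $\mathfrak{f}_p$ is even.

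The genuine gap is the passage to the dichotomy --- precisely the step you yourself flag as ``the main obstacle''. The paper obtains ``either $\Hom=0$ in both directions, or $p=\fchar(K)>0$ and both jacobians are supersingular'' from Proposition \ref{crucial}(3), which is Theorem 2.1 of \cite{ZarhinSh03} applied to the $2$-torsion: $J(C_f)[2]$ is an \emph{absolutely} simple $\Gal(K)$-module (double transitivity, via \cite{Klemm}), $J(C_h)[2]$ is simple, and their fields of definition are linearly disjoint. Your substitute, the cyclotomic-character rigidity argument, does not close. Concretely: (a) the character $\bar\chi_f$ presupposes that the chosen subfield $\Q(\zeta_n)\subset\End^0(J(C_f))$ is Galois-stable, which can fail whenever $\End^0$ is strictly larger than $\Q(\zeta_n)$; (b) the claim that an isogeny forces $\bar\chi_f=\bar\chi_h$ ignores that $\sigma(\phi)\ne\phi$ --- the twisting $\sigma(\phi)=a_\sigma\phi$ with $a_\sigma\in \End^0(Y)^{*}$ is exactly the subtlety that the paper's proof of Proposition \ref{isogEll} is built around; and (c), most seriously, supersingularity is \emph{not} the only positive-characteristic way for your rigidity to fail: in characteristic $p$ an abelian variety of CM type by $\Q(\zeta_n)$ can be isogenous to a power of an \emph{ordinary} CM elliptic curve (for instance, when $n\equiv 3 \bmod 4$ one has $\Q(\sqrt{-n})\subset\Q(\zeta_n)$, and $E^{(n-1)/2}$ with $E$ ordinary with CM by $\Q(\sqrt{-n})$ carries $\Q(\zeta_n)$-multiplication); there $\End^0\cong\Mat_{(n-1)/2}\bigl(\Q(\sqrt{-n})\bigr)$ is noncommutative, the copy of $\Q(\zeta_n)$ is not canonical, and your character argument breaks exactly as in the supersingular case, yet the variety is not supersingular. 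So ``both jacobians are supersingular'' does not follow from your argument. A smaller additional gap: alternative (1) asserts vanishing of $\Hom$, and your reduction of this to non-isogeny needs $J(C_f)$ to be simple over $\bar{K}$, whereas simplicity of the $\Gal(K)$-module $J(C_f)[2]$ only yields simplicity of $J(C_f)$ over $K$.
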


\begin{rem}
If $n=3$ then $C_f$ and $C_h$ are elliptic curves that are canonically isomorphic to their jacobians. Clearly, $\mathbf{S}_3$ is a {\sl doubly transitive} permutation group while $2$ is a {\sl primitive root} $\bmod \ 3$. Hence, Theorem \ref{endo}(i) and Theorem  \ref{main} are special cases of Theorems \ref{endoH} and \ref{mainH}, respectively.
Now Theorem \ref{endo}(ii) follows from Theorem \ref{endo}(i)  combined with \cite[Ch. 4, Sect. 4.4, Prop. 4.9]{ShimuraIA}  and Remark \ref{Chow}.
\end{rem}

\begin{ex}
\label{SelmerG}
 Let $K=\Q$ and $n \ge 3$ is an odd integer. Let $$f(x)=x^n-x-1, h(x)=x^n-1.$$  By a theorem of Osada \cite[Cor. 3 on p. 233]{Osada},
$\Gal(f)=\Sn$, which is doubly transitive. (The irreducibility of $f(x)$ was proven by Selmer \cite[Th. 1]{Selmer}.) On the other hand, $h(x)$ is obviously reducible over $\Q$. It is well known that $J(C_h)$ is an abelian variety of CM type with multiplication by the $n$th cyclotomic field $\Q(\zeta_n)$. It was proven in \cite[Examples 2.2]{ZarhinMRL} that $J(C_f)$ is absolutely simple (and even $\End(J(C_f))=\Z$). Hence, $J(C_f)$ and $J(C_h)$ are not isogenous over $\bar{\Q}$ (and even over $\C$), and 
$$\Hom(J(C_f),J(C_h))=\{0\}, \ \Hom(J(C_h),J(C_f))=\{0\}.$$
\end{ex}

\begin{ex} Let $K=\Q$ and $n \ge 3$ is an odd integer. Let  $$f(x)=x^n-2, \ h(x)=x^n-1.$$ By Eisenstein's criterion, the polynomial $f(x)$ is irreducible over the field $\Q_2$
of $2$-adic numbers
and therefore is irreducible also over $\Q$
 (see also \cite[Ch. VI, Sect. 9, Th. 9.1]{Lang}). Obviously, $h(x)$ is reducible over $\Q$. However, $J(C_f)$ and $J(C_h)$ are not only isogenous over $\bar{\Q}$ but actually become isomorphic over $\bar{\Q}$. On the other hand, both   $J(C_f)$ and $J(C_h)$ are abelian varieties of CM type with multiplication by $\Q(\zeta_n)$.
\end{ex}

The following assertion may be viewed as a generalization of Corollary \ref{S3A3}.

\begin{cor}
\label{S3A3G}
Suppose that $n\ge 3$ is an odd prime such that $2$ is a primitive root $\bmod \ n$.
Let $K$ be a field of characteristic different from
$2$. 
Suppose that $f(x), h(x) \in K[x]$ are degree $n$ polynomials without repeated roots that enjoy the following properties.

\begin{itemize}
\item[(i)]
$f(x)$ is irreducible over $K$ and its Galois group $\Gal(f) \subset \Perm(\RR_f)$ is doubly transitive.
\item[(ii)]
$h(x)$ is irreducible over $K$ and $K(\RR_h)/K$ is a degree $n$ cyclic field extension, i.e., $\Gal(h) \cong \Z/n\Z$.
\end{itemize}
Then the hyperelliptic jacobians $J(C_f)$ and $J(C_h)$ enjoy precisely one of the following properties.
\begin{enumerate}
\item[(1)]
$\Hom(J(C_f),J(C_h))=\{0\}, \ \Hom(J(C_h),J(C_f))=\{0\}$.
\item[(2)]
$p=\fchar(K)>0$ and  both $J(C_f)$ and $J(C_h)$ are  supersingular abelian varieties. 
In addition, $n$ does not divide $p-1$.
More precisely, if
$p \ne n$ then the residue
$p \bmod  n$ has even multiplicative order in $(\Z/n\Z)^{*}$.
\end{enumerate}
\end{cor}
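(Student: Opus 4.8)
The plan is to reduce Corollary \ref{S3A3G} directly to Theorem \ref{mainH}, exactly as Corollary \ref{S3A3} was handled in the cubic case. The only non-formal content is to upgrade hypothesis (ii) --- that $\Gal(h)$ is cyclic of order $n$ --- into the linear disjointness hypothesis (iii) required by Theorem \ref{mainH}; everything else is bookkeeping.

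First I would record that the hypotheses of the corollary place us squarely in the setting of Proposition \ref{BourbakiLang}. Indeed, $n$ is an odd prime, both $f(x)$ and $h(x)$ are irreducible of degree $n$ without repeated roots, and by (ii) the extension $K(\RR_h)/K$ is cyclic of degree $n$, so $\Gal(h/K) \cong \Z/n\Z$. Thus Proposition \ref{BourbakiLang} applies and yields a dichotomy: either (a) $K(\RR_f)$ and $K(\RR_h)$ are linearly disjoint over $K$, or (b) $K(\RR_f) \supseteq K(\RR_h)$, the cyclic group $\Z/n\Z$ is a quotient of $\Gal(f/K)$, and --- crucially --- the permutation group $\Gal(f/K)$ is not doubly transitive.

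The key step, and really the crux of the argument, is to eliminate alternative (b). But hypothesis (i) of the corollary asserts precisely that $\Gal(f) \subset \Perm(\RR_f)$ is doubly transitive, which directly contradicts the last clause of alternative (b). Hence (b) is impossible, and we conclude that $K(\RR_f)$ and $K(\RR_h)$ are linearly disjoint over $K$.

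With linear disjointness in hand, all three hypotheses of Theorem \ref{mainH} are verified: $f(x)$ is irreducible with doubly transitive Galois group, $h(x)$ is irreducible, and the two splitting fields are linearly disjoint over $K$. Theorem \ref{mainH} then provides exactly the dichotomy (1)/(2), where the numerical assertions about $p = \fchar(K)$ --- that $n \nmid p-1$, and that $p \bmod n$ has even multiplicative order in $(\Z/n\Z)^{*}$ when $p \ne n$ --- are transcribed verbatim from conclusion (2) of that theorem. Since there is no genuine obstacle here beyond invoking Proposition \ref{BourbakiLang} correctly, I expect the entire difficulty to be concentrated in Theorem \ref{mainH} itself, which this corollary merely specializes.
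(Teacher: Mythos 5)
Your proposal is correct and is essentially identical to the paper's proof: the paper also applies Proposition \ref{BourbakiLang}, with the double transitivity of $\Gal(f)$ implicitly ruling out the second alternative of that proposition, and then invokes Theorem \ref{mainH}. You have merely spelled out the elimination of alternative (b), which the paper leaves tacit.
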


\begin{proof}[Proof of Corollary \ref{S3A3G} (modulo Theorem \ref{mainH})]
Applying Proposition \ref{BourbakiLang}, we conclude that the fields
$K(\RR_f)$ and $K(\RR_h)$ are linearly disjoint over $K$. Now the desired result follows from Theorem \ref{mainH}.
\end{proof}

\begin{ex}
Let us put $K=\Q, n=5$, and
$$ f_1(x)=x^5-x-1,  \ f_2(x)=x^5+15x+12\in \Q[x];$$
$$h(x)=x^5-110x^3-55x^2+2310x+979\in \Q[x].$$
Notice that $2$ is a {\sl primitive root} $\bmod \ 5$.
We have seen (Example \ref{SelmerG}) that $f_1(x)$ is irreducible with the doubly transitive Galois group $\mathbf{S}_5$. It is known \cite[Sect. 5, p. 398]{Dummit}
that $f_2(x)$ is irreducible, whose Galois group  is the {\sl doubly transitive  Frobenius group} $\mathbf{F}_{20}$ of order $20$ \cite[p. 388]{Dummit}. 
It is also known  \cite[p. 400]{Dummit} that $h(x)$ is irreducible, whose Galois group is a  cyclic group of order $5$.
Applying Corollary \ref{S3A3G} to the pair of polynomials $f_1(x)$ and $h(x)$, and to the pair of polynomials  $f_2(x)$ and $h(x)$, we conclude that
$$\Hom(J(C_{f_1}),J(C_h))=\{0\}, \ \Hom(J(C_h),J(C_{f_1}))=\{0\};$$
$$\Hom(J(C_{f_2}),J(C_h))=\{0\}, \ \Hom(J(C_h),J(C_{f_2}))=\{0\}.$$

I claim that
$$\Hom(J(C_{f_1}),J(C_{f_2}))=\{0\}, \ \Hom(J(C_{f_2}),J(C_{f_1}))=\{0\}.$$

Indeed, in light of Theorem \ref{mainH}, it suffices to check that the splitting fields $\Q(\RR_{f_1})$ and $\Q(\RR_{f_2})$
are {\sl linearly disjoint} over $\Q$, i.e., their {\sl intersection} $F=\Q(\RR_{f_1})\cap\Q(\RR_{f_2})$ coincides with $\Q$.
Suppose that this is not the case, i.e. $[F:\Q]>1$. Clearly,  $F/\Q$ is a Galois extension. It is also clear that $[\Q(\RR_{f_1}):\Q]>[\Q(\RR_{f_2}):\Q]$ and the Frobenius group $\Gal(f_2)$
is {\sl not} isomorphic to a quotient of $\Gal(f_1)=\mathbf{S}_5$.  Hence,  none of $\Q(\RR_{f_1})$ and  $\Q(\RR_{f_2})$ is  a subfield of the other one,
and therefore $F$ is a {\sl proper subfield} of both $\Q(\RR_{f_1})$ and $\Q(\RR_{f_2})$. In particular, the natural {\sl surjective} homomorphism
$$\mathbf{S}_5=\Gal(f_1)\twoheadrightarrow \Gal(F/\Q)$$
has a {\sl nontrivial} kernel and therefore $\Gal(F/\Q)$ is a cyclic group of order $2$, i.e., $F$ is a quadratic field. Since $F$ is a subfield of 
$\Q(\RR_{f_1})$ and the only index $2$ subgroup of $\mathbf{S}_5$ is $\mathbf{A}_5$,  the field $F=\Q(\sqrt{D_1})$ where $D_1$ is the discriminant of $f_1(x)$,
which  equals $19\cdot 151$. On the other hand, $\Q(\RR_{f_2})$ contains the quadratic subfield $\Q(\sqrt{D_2})$ where $D_2$ is the discriminant of $f_1(x)$,
which  equals $2^{10}3^4 5^5$  \cite{Dummit} and therefore  $\Q(\RR_{f_2})$ contains the quadratic field $F_2:=\Q(\sqrt{5})$. This implies that $\Q(\RR_{f_2})$ contains
two {\sl distinct} quadratic fields $F$ and $F_2$ and therefore there is a surjective group homomorphism
$$\Gal(\Q(\RR_{f_2})/\Q)\twoheadrightarrow \Gal(F_2F/\Q) \cong \Z/2\Z \times \Z/2\Z.$$

However,  the Sylow $2$-subgroup of the Frobenius group $\Gal(\Q(\RR_{f_2})/\Q)$ is {cyclic}. Hence, $\Gal(\Q(\RR_{f_2})/\Q)$ does not have a quotient that is
isomorphic to $\Z/2\Z \times \Z/2\Z$. The obtained contradiction proves the desired result.

\end{ex}

The paper is organized as follows. 
In Section \ref{order2} we discuss Galois properties of points of order 2 on hyperelliptic jacobians
and abelian varieties. Notice that Proposition \ref{isogEll}  is central to the  results of the paper and may be of certain independent interest.
 We also formulate there several useful auxiliary results about matrix algebras of skew-fields and splitting fields of polynomials.  
Section \ref{mainproof}  contains the proofs of  Theorems \ref{endoH} and \ref{mainH}.
In Sections \ref{crucialP} and \ref{cyclicProof} we prove Proposition \ref{BourbakiLang} and auxiliary results from Section  \ref{order2}.

This paper may be viewed as a follow-up of  \cite{ZarhinSh03,ZarhinMZ06}. 

{\bf Acknowledgements}. I am grateful to Alexei Skorobogatov and Umberto Zannier for their interest in this topic.
My special thanks go to Ken Ribet and  both referees for useful comments that helped to improve the exposition and simplify the arguments.

\section{Order $2$ points on hyperelliptic jacobians}
\label{order2}
Recall that $ \fchar(K)\ne 2$.
We start with an arbitrary positive-dimensional abelian variety  $X$  over $K$. If 
$d$ is a positive integer that is not divisible
by $\fchar(K)$ then we write $X[d]$ for the kernel of
multiplication by $d$ in $X(\bar{K})$.  It is well known that
$$X[d]\subset X(K_s)\subset X(\bar{K});$$
in addition, $X[d]$ is a $\Gal(K)$-submodule of $X(K_s)$; this submodule is isomorphic as a commutative group to $(\Z/d\Z)^{2\dim(X)}$ 
(\cite[Sect. 6]{Mumford}, \cite[Sect. 8, Remark 8.4]{Milne}).
We denote by
$$\tilde{\rho}_{d,X}:\Gal(K) \to \Aut_{\Z/d\Z}(X[d])$$
 the corresponding (continuous)
homomorphism defining the  action of $\Gal(K)$ on $X[d]$ and  put
$$\tilde{G}_{d,X}:=\tilde{\rho}_{d,X}(\Gal(K)) \subset
\Aut_{\Z/d\Z}(X[d]).$$ 
Let us consider
$$G(d):=\ker(\tilde{\rho}_{d,X})\subset \Gal(K),$$
which is a closed {\sl normal subgroup} of finite index (and therefore also open) in $\Gal(K)$. 
Since $G(d)$ is open  normal, the subfield of $G(d)$-invariants in $K_s$
$$K(X[d]):=K_s^{G(d)}$$
is a finite Galois extension of $K$. Hence,   $\tilde{G}_{d,X}=\Gal(K)/G(d)$ coincides with
the Galois group of   $K(X[d])/K$. 
By definition,   $K(X[d])$ coincides with the {\sl field of definition} of all torsion points of order dividing $d$ on
$X$.  
For example, 
$X[2]$ is a $2\dim(X)$-dimensional vector space over the $2$ elements
field $\F_2=\Z/2\Z$ and the inclusion $\tilde{G}_{2,X}
\subset \Aut_{\F_{2}}(X[2])$ defines a {\sl faithful} linear
representation of the group  $\tilde{G}_{2,X}$ in the vector
space $X[2]$. 


\begin{rem}
\label{tildeG}
The surjectiveness of $\Gal(K) \twoheadrightarrow \tilde{G}_{2,X}$  implies that
 the $\Gal(K)$-module $X[2]$ is simple (resp. absolutely simple) if and only if the $\tilde{G}_{2,X}$-module $X[2]$ is absolutely simple.
\end{rem}

\begin{rem}
\label{silver}
\begin{itemize}
\item[(i)]
Let $K(X[4])$ be the field of definition of all points of order $4$ on $X$. 
Recall that $K(X[4])/K$ is a finite Galois field extension. Clearly, 
$$K\subset K(X[2])\subset K(X[4])\subset K_s.$$

It is known that  the Galois group $\Gal(K(X[4])/K(X[2]))$ is a finite commutative group of exponent $2$ or $1$. For reader's convenience, let us give a short proof.
Let 
$$\sigma \in \Gal(K(X[4])/K(X[2]))\subset \Gal(K(X[4])/K)=\tilde{G}_{4,X}.$$
Then for each $x \in X[4]$ we have
$2x \in X[2]$ and therefore $\sigma(2x)=2x$. This implies that
$2(\sigma(x)-x)=\sigma(2x)-2x=0$, i.e., $y=\sigma(x)-x\in X[2]$ and therefore $\sigma(y)=y$.
Thus
$$\sigma^2(x)=\sigma(x+y)=\sigma(x)+y=(x+y)+y=x+2y=x.$$



This proves that each $\sigma \in \Gal(K(X[4])/K(X[2]))$ has order dividing $2$ and therefore
$\Gal(K(X[4])/K(X[2]))$ is a (finite) commutative group of exponent $2$ or $1$.
\item[(ii)]
All the endomorphisms of $X$ are defined over $K(X[4])$ (a special case of Theorem  2.4 of A. Silverberg \cite{Silverberg}). See \cite{GK,Remond,Pip} for further results about the field of definition of endomorphisms of abelian varieties.
\end{itemize}
\end{rem}

The following two assertions will be used in the proof of Theorems \ref{endoH} and \ref{mainH}.

\begin{lem}
\label{simpleX2}
\begin{footnote}
{Actually, the statement and proof of Lemma \ref{simpleX2} below
may be extracted from  \cite[p. 4644--4645]{Pip}.}
\end{footnote}
Let $n$ be an odd prime  such that $2$ is a primitive root $\bmod \ n$ of $1$. Let $g=(n-1)/2$.
Suppose that $\dim(X)=g$ and
the degree $[K(X[2]):K]$ is divisible by  $n$.

Then the $\Gal(K)$-module $X[2]$ is simple.
\end{lem}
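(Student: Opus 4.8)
The plan is to exploit the faithful linear action of the finite group $\tilde{G}_{2,X}=\Gal(K(X[2])/K)$ on the $\F_2$-vector space $V:=X[2]$, whose dimension is $2\dim(X)=2g=n-1$. Since the $\Gal(K)$-action on $V$ factors through the surjection $\Gal(K)\twoheadrightarrow\tilde{G}_{2,X}$, a subset of $V$ is a $\Gal(K)$-submodule if and only if it is a $\tilde{G}_{2,X}$-submodule; hence it suffices to prove that $V$ is simple as a $\tilde{G}_{2,X}$-module. In fact I will show it is already simple as a module over a single cyclic subgroup.

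First I would use the hypothesis that $n$ divides $[K(X[2]):K]=\#\tilde{G}_{2,X}$ together with Cauchy's theorem to produce an element $\sigma\in\tilde{G}_{2,X}$ of order exactly $n$. The whole argument then reduces to analyzing how one element of order $n$ can act on an $(n-1)$-dimensional $\F_2$-space.

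The key input is the primitive-root hypothesis. Because $2$ is a primitive root modulo the odd prime $n$, multiplication by $2$ acts transitively on $(\Z/n\Z)^{*}$, so the $n$th cyclotomic polynomial $\Phi_n(x)$ is irreducible over $\F_2$ of degree $n-1$; equivalently $x^n-1=(x-1)\Phi_n(x)$ is the factorization into monic irreducibles over $\F_2$. The minimal polynomial $m_\sigma(x)$ of $\sigma$ acting on $V$ divides $x^n-1$; since $\sigma\ne\mathrm{id}$ it is not $(x-1)$, and since $\deg m_\sigma\le\dim V=n-1<\deg\bigl((x-1)\Phi_n(x)\bigr)=n$, the only remaining divisor is $m_\sigma=\Phi_n$. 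Thus the minimal polynomial of $\sigma$ is irreducible of degree equal to $\dim V$, so it coincides with the characteristic polynomial and $V$ is a cyclic $\F_2[\sigma]$-module isomorphic to $\F_2[x]/\Phi_n(x)\cong\F_{2^{n-1}}$.

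Finally, since $\Phi_n$ is irreducible, $\F_2[x]/\Phi_n(x)$ is a field and hence a simple module over itself, so $V$ is already simple as a module over the cyclic subgroup $\langle\sigma\rangle$. Any $\tilde{G}_{2,X}$-submodule of $V$ (a fortiori any $\Gal(K)$-submodule) is in particular a $\langle\sigma\rangle$-submodule, hence $0$ or $V$; this yields the simplicity of $X[2]$ as a $\Gal(K)$-module. The only mildly delicate points are translating the primitive-root hypothesis into the irreducibility of $\Phi_n$ over $\F_2$ and the degree bookkeeping that pins down $m_\sigma$; both are elementary, so I do not anticipate a serious obstacle.
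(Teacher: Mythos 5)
Your proof is correct and takes essentially the same route as the paper: both arguments reduce, via Cauchy's theorem, to the action of a single element of order $n$ on the $(n-1)$-dimensional $\F_2$-vector space $X[2]$, and both use the primitive-root hypothesis precisely to conclude that this action is irreducible. The only difference is that the paper outsources that irreducibility to a citation of \cite[Lemma 2.8]{Pip}, whereas you prove it directly (minimal polynomial divides $x^n-1=(x-1)\Phi_n(x)$, degree bookkeeping forces it to be $\Phi_n$, which is irreducible over $\F_2$ since $2$ is a primitive root modulo $n$); your inlined argument is sound.
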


\begin{prop}
\label{isogEll}
Let $n$ be an odd prime  such that $2$ is a primitive root $\bmod \ n$. Let $g=(n-1)/2$.
Let $X$ and $Y$ be $g$-dimensional abelian varieties defined over $K$ that are isogenous over $\bar{K}$. Suppose that $K(Y[2])=K$ and the degree
 $[K(X[2]):K]$ is divisible by  $n$.

Then both $X$ and $Y$ are abelian varieties of CM type over $\bar{K}$ with multiplication by $\Q(\zeta_{n})$.

\end{prop}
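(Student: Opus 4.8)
The plan is to produce an embedding $\Q(\zeta_n)\hookrightarrow\End^0(X)$: since $[\Q(\zeta_n):\Q]=n-1=2\dim(X)$ and $\Q(\zeta_n)$ is a CM field, such an embedding already exhibits $X$ as an abelian variety of CM type with multiplication by $\Q(\zeta_n)$, and, because $X$ and $Y$ are isogenous over $\bar{K}$ (hence over $K_s$ by Chow's theorem, Remark \ref{Chow}) so that $\End^0(X)\cong\End^0(Y)$, the same conclusion will then follow for $Y$. First I would fix an isogeny $\phi\colon X\to Y$ defined over $K_s$ and choose an element $\tilde\tau\in\Gal(K)$ of order $n$ whose image in $\tilde G_{2,X}=\Gal(K(X[2])/K)$ has order $n$; this is possible because $n$ is prime and divides $[K(X[2]):K]$. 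Since $2$ is a primitive root $\bmod\ n$, the cyclotomic polynomial $\Phi_n$ is irreducible over $\F_2$ of degree $n-1=\dim_{\F_2}X[2]$, so the minimal (hence characteristic) polynomial of $\tilde\rho_{2,X}(\tilde\tau)$ on $X[2]$ is $\Phi_n\bmod 2$. Writing $\rho_{2,X}\colon\Gal(K)\to\Aut(T_2(X))$ for the $2$-adic representation, the characteristic polynomial of $\rho_{2,X}(\tilde\tau)$ on $V_2(X)=T_2(X)\otimes\Q_2$ is then a monic degree-$(n-1)$ divisor of $t^n-1$, namely $\Phi_n$ itself.

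The conceptual heart of the argument is that $\tilde\tau$ acts \emph{trivially} on the $2$-adic Tate module of $Y$. Indeed, $K(Y[2])=K$ means that $\rho_{2,Y}(\Gal(K))$ lies in the congruence kernel $\ker\big(\Aut(T_2(Y))\to\Aut(Y[2])\big)$, which is a pro-$2$ group; as $\tilde\tau$ has odd order $n$, this forces $\rho_{2,Y}(\tilde\tau)=1$ (its eigenvalues are $n$-th roots of unity congruent to $1$ modulo the maximal ideal, hence equal to $1$, and a finite-order unipotent element in characteristic $0$ is trivial). Next I would form the quasi-endomorphism $c:=\phi^{-1}\circ\phi^{\tilde\tau}\in\End^0(X)$, where $\phi^{-1}$ is taken in the isogeny category. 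From the standard identity $(\phi^{\tilde\tau})_*=\rho_{2,Y}(\tilde\tau)\,\phi_*\,\rho_{2,X}(\tilde\tau)^{-1}$ on $V_2$ together with $\rho_{2,Y}(\tilde\tau)=1$, one computes that $c$ acts on $V_2(X)$ \emph{exactly} as $\rho_{2,X}(\tilde\tau)^{-1}$. This exact identification on $V_2$ is the crucial point: it lets the Galois symmetry of $X[2]$ be realized by a genuine element of $\End^0(X)$ without ever reducing an endomorphism modulo $2$, thereby sidestepping the integrality and degree-parity difficulties that would otherwise arise.

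It then remains to identify $\Q[c]$. The characteristic polynomial of $c$ on $V_2(X)$ equals that of $\rho_{2,X}(\tilde\tau)^{-1}$, which is again $\Phi_n$ (the set of roots of $\Phi_n$ is stable under $\lambda\mapsto\lambda^{-1}$). Since the characteristic polynomial of any element of $\End^0(X)$ is rational and independent of $\ell$, the characteristic polynomial of $c$ is $\Phi_n\in\Q[t]$; as $\End^0(X)\hookrightarrow\End_{\Q_2}(V_2(X))$ is faithful, Cayley--Hamilton yields $\Phi_n(c)=0$ in $\End^0(X)$. Because $\Phi_n$ is irreducible over $\Q$, the subalgebra $\Q[c]$ is a field isomorphic to $\Q(\zeta_n)$, which completes the plan. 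The step I expect to demand the most care is the verification that the characteristic polynomial of $\rho_{2,X}(\tilde\tau)$ is exactly $\Phi_n$ rather than some other degree-$(n-1)$ factor of $t^n-1$ --- precisely where the hypothesis that $2$ is a primitive root modulo $n$ is used --- while the main idea that makes everything work is the vanishing $\rho_{2,Y}(\tilde\tau)=1$, which converts the cyclotomic action on $X[2]$ into an honest quasi-endomorphism of $X$.
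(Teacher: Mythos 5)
Your argument breaks at its very first step: the element $\tilde\tau$ you want does not exist, for \emph{any} field $K$. By the Artin--Schreier theorem, every finite subgroup of an absolute Galois group $\Gal(K)=\Aut(\bar{K}/K)$ has order $1$ or $2$ (if $G\subset \Gal(K)$ is finite, Artin's lemma gives $[\bar{K}:\bar{K}^G]=|G|$, and an algebraically closed field cannot have finite degree $>2$ over a proper subfield). So $\Gal(K)$ contains no element of odd prime order $n$; e.g.\ the only torsion in $\Gal(\Q)$ consists of complex conjugations. Cauchy's theorem does give an order-$n$ element $\sigma$ of the \emph{finite quotient} $\tilde{G}_{2,X}=\Gal(K(X[2])/K)$, but your justification (``$n$ is prime and divides $[K(X[2]):K]$'') only produces $\sigma$ there; it can never be lifted to an order-$n$ element of the profinite group $\Gal(K)$. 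This is not a removable technicality in your write-up, because the finite order of $\tilde\tau$ \emph{itself} (not merely of its image on $X[2]$) is used twice in an essential way: once to force $\rho_{2,Y}(\tilde\tau)=1$ (a finite odd-order element of the pro-$2$ congruence kernel), and once to force $\rho_{2,X}(\tilde\tau)^n=1$, which is what pins the characteristic polynomial on $V_2(X)$ down to $\Phi_n$. For an arbitrary lift $\tilde\tau$ of $\sigma$ both conclusions fail: $\rho_{2,Y}(\tilde\tau)$ is just some element of a pro-$2$ group, and $\rho_{2,X}(\tilde\tau)$ will typically have infinite order.

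The gap can be closed, but only by adding a genuinely new ingredient, and this is exactly where the paper takes a different route. The paper never looks for torsion in $\Gal(K)$: it first arranges (after finite base changes, using that $\Gal(K(X[4])/K(X[2]))$ is a $2$-group, Remark \ref{silver}) that $K(X[4])=K(X[2])$ is cyclic of degree $n$ over $K$ and $K(Y[4])=K$; it then invokes Silverberg's theorem so that all homomorphisms $X\to Y$ are defined over $K(X[2])$ and all endomorphisms of $Y$ over $K$, takes an isogeny $\phi$ of minimal degree, shows $\phi$ is not defined over $K$ (via simplicity of the Galois module $X[2]$, Lemma \ref{simpleX2}, against triviality of $Y[2]$), and defines $a_\sigma\in\End^0(Y)^{*}$ by $\sigma(\phi)=a_\sigma\phi$ for $\sigma$ in the \emph{finite} cyclic group $\Gal(K(X[2])/K)$, where order-$n$ elements do exist; the assignment $\sigma\mapsto a_\sigma$ is a nontrivial homomorphism, so $c=a_\sigma$ satisfies $c^n=1$, $c\neq 1$, and a dimension bound from Shimura's book identifies $\Q[c]\cong\Q(\zeta_n)$ inside $\End^0(Y)$. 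If you want to keep your Tate-module mechanism, you must manufacture the torsion inside the \emph{image} of Galois rather than in $\Gal(K)$: let $\Gamma_1=\ker(\rho_{2,Y})\subset\Gal(K)$; since $\rho_{2,Y}(\Gal(K))$ is pro-$2$, the image of $\Gamma_1$ in $\tilde{G}_{2,X}$ has $2$-power index and hence contains $\sigma$; the preimage of $\langle\sigma\rangle$ in $\rho_{2,X}(\Gamma_1)$ is an extension of $\Z/n\Z$ by a pro-$2$ group, and profinite Schur--Zassenhaus then yields $\tilde\tau\in\Gamma_1$ with $\rho_{2,X}(\tilde\tau)$ of exact order $n$ and $\rho_{2,Y}(\tilde\tau)=1$. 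With that replacement your computation of $c=\phi^{-1}\circ\phi^{\tilde\tau}$ and the identification $\Q[c]\cong\Q(\zeta_n)$ do go through; as written, however, the proof has a genuine gap.
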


We will prove Lemma \ref{simpleX2} and Proposition \ref{isogEll} in Section \ref{crucialP}.

Now let us turn to hyperelliptic jacobians. The following assertion will play a crucial role in the proof of Theorems \ref{endoH} and \ref{mainH}.

\begin{prop}
\label{crucial}
Suppose that $n\ge 3$ is an odd prime and $2$ is a primitive root $\bmod \ n$.
Let $f(x)\in K[x]$  be a degree $n$  polynomial without repeated roots.

Then the jacobian $J(C_f)$ enjoys the following properties.

\begin{enumerate}
\item[(0)]
There is a canonical  isomorphism of finite groups
$$\tilde{G}_{2,J(C_f)} \cong \Gal(f/K).$$
In addition, $K(\RR_f)$ coincides with the field $K(J(C_f)[2])$ of definition of all points of order $2$ on $J(C_f)$.
\item[(1)]
If $f(x)$ is irreducible over $K$ then $\Gal(f)$ contains a cyclic subgroup $H$ of order $n$ and the $\Gal(K)$-module $J(C_f)[2]$ is simple.
\item[(2)]
If $f(x)$ is irreducible over $K$ and its Galois group $\Gal(\RR_f) \subset \Perm(\RR_f)$ is doubly transitive then the $\Gal(K)$-module $J(C_f)[2]$ is absolutely simple.
\item[(3)]
Suppose that $f(x)$ is irreducible over $K$ and its Galois group $\Gal(\RR_f) \subset \Perm(\RR_f)$ is doubly transitive. 
Let $h(x)\in K[x]$  be a degree $n$  polynomial without repeated roots that is irreducible over $K$.

If the splitting fields $K(\RR_f)$ and $K(\RR_h)$ of $f(x)$ and $h(x)$ are linearly disjoint over $K$
then either
$$\Hom(J(C_f),J(C_h))=\{0\}, \ \Hom(J(C_h),J(C_f))=\{0\}$$
or $\fchar(K)>0$ and both $J(C_f)$ and $J(C_h)$ are supersingular abelian varieties.

\end{enumerate}
\end{prop}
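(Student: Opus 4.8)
The plan for part~(0) is to make the $2$-torsion explicit: for odd $n$ the classes $[(\alpha,0)-\infty]$ with $\alpha\in\RR_f$ generate $J(C_f)[2]$, subject to the single relation $\sum_{\alpha\in\RR_f}[(\alpha,0)-\infty]=0$ coming from $\mathrm{div}(y)$. This identifies $J(C_f)[2]$ with the quotient $\F_2^{\RR_f}/\F_2\cdot\mathbf 1$ of the permutation module, on which $\Gal(K)$ acts through its action on $\RR_f$, i.e.\ through $\Gal(f)$. I would then check this representation is faithful (a permutation acting trivially on the quotient must fix each basis line, hence each root, as soon as $n\ge 3$), giving $\tilde G_{2,J(C_f)}\cong\Gal(f)$ and $K(J(C_f)[2])=K(\RR_f)$. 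For~(1), irreducibility makes $\Gal(f)$ transitive, so $n\mid|\Gal(f)|=[K(\RR_f):K]$ and Cauchy's theorem ($n$ prime) produces the cyclic subgroup $H$ of order $n$; since $\dim J(C_f)=g$ and $n\mid[K(J(C_f)[2]):K]$, Lemma~\ref{simpleX2} yields simplicity of $J(C_f)[2]$. For~(2), double transitivity means $\Gal(f)$ has exactly two orbits on $\RR_f\times\RR_f$, so the endomorphism (Hecke) algebra of the permutation module is two–dimensional; as $\fchar(K)=2\nmid n$ the trivial line splits off and the complementary module $\F_2^{\RR_f}/\F_2\mathbf 1$ has only scalar endomorphisms, and because the orbit count is unchanged by extension of scalars this persists over $\bar\F_2$, giving absolute simplicity, which transfers to $\Gal(K)$ by Remark~\ref{tildeG}.

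For part~(3) set $X=J(C_f)$, $Y=J(C_h)$ and $W=\Hom(X,Y)$, a finitely generated free $\Z$-module on which $\Gal(K)$ acts by conjugation through a finite quotient; assume $W\ne\{0\}$. Restricting homomorphisms to $2$-torsion gives a $\Gal(K)$-equivariant map $W/2W\to\Hom_{\F_2}(X[2],Y[2])$ whose kernel is exactly $2W$ (a homomorphism killing $X[2]$ factors through multiplication by $2$), hence an \emph{injection} $W/2W\hookrightarrow\Hom_{\F_2}(X[2],Y[2])$. By part~(0), $\Gal(K)$ acts on $X[2]$ through $\Gal(f)$ and on $Y[2]$ through $\Gal(h)$, and by linear disjointness $\Gal(K)\twoheadrightarrow\Gal(f)\times\Gal(h)$; thus the target is the external tensor product $X[2]^{\vee}\otimes Y[2]$ viewed as a module for $\Gal(f)\times\Gal(h)$. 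Now I would prove this module is \emph{simple}: by part~(2) and Burnside's theorem $\F_2[\Gal(f)]$ surjects onto $\End_{\F_2}(X[2])$, so any $\Gal(f)\times\Gal(h)$-submodule is stable under $\End_{\F_2}(X[2])\otimes 1$ and therefore has the shape $X[2]\otimes B'$ for a subspace $B'\subseteq Y[2]$; stability under $1\times\Gal(h)$ forces $B'$ to be a $\Gal(h)$-submodule, and by part~(1) $B'=0$ or $B'=Y[2]$. Hence $\Hom_{\F_2}(X[2],Y[2])$ is a simple $\Gal(K)$-module of $\F_2$-dimension $(2g)^2$. Since $W\ne\{0\}$ forces $W/2W\ne 0$, the injection into this simple module is onto, so $\rank_\Z W=(2g)^2=4\,\dim X\,\dim Y$.

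To finish I would invoke the standard inequality $\rank_\Z\Hom(X,Y)\le 4\,\dim X\,\dim Y$, whose equality case is classically known to hold \emph{exactly} when $X$ and $Y$ are both supersingular (equivalently, isogenous to powers of a single supersingular elliptic curve), which in particular forces $\fchar(K)>0$. Our computation realizes precisely this equality, so in the alternative $W\ne\{0\}$ both $J(C_f)$ and $J(C_h)$ are supersingular over a field of positive characteristic; the remaining possibility is $W=\{0\}$, i.e.\ $\Hom(J(C_f),J(C_h))=\{0\}$ (and symmetrically $\Hom(J(C_h),J(C_f))=\{0\}$). I expect the main obstacle to be the simplicity of the external tensor product: this is exactly the step that consumes the full strength of the hypotheses, since \emph{absolute} simplicity of $X[2]$ (equivalently double transitivity of $\Gal(f)$) is what licenses the Burnside reduction of submodules to the form $X[2]\otimes B'$, while linear disjointness is what makes $\Gal(K)$ surject onto the \emph{product} $\Gal(f)\times\Gal(h)$ so that the two factors act independently; combined with the sharp equality case of the $\Hom$-rank bound, this is what pins the dichotomy down to ``no homomorphisms versus supersingular.''
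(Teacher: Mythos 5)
Parts (0) and (1) of your proposal are correct and essentially the paper's own treatment: (0) is the standard identification of $J(C_f)[2]$ with the heart $Q_{\RR_f}$ of the permutation module (Subsection \ref{QRR}), and (1) is exactly the paper's argument, routed through the already-stated Lemma \ref{simpleX2} rather than repeating its proof via \cite[Lemma 2.8]{Pip}.

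Part (2) has a genuine gap. Your Hecke-algebra computation is right: double transitivity gives two orbits on $\RR_f\times\RR_f$, hence $\dim_{\F_2}\End_{\Gal(f)}(\F_2^{\RR_f})=2$; since $n$ is odd the trivial line splits off, so $\End_{\Gal(f)}(Q_{\RR_f})=\F_2$, and likewise over $\bar{\F}_2$ (this is Klemm's theorem, which the paper cites). But the final inference --- scalar endomorphism algebra implies (absolute) simplicity --- is false: it gives only indecomposability. A counterexample satisfying all the hypotheses you actually use: let $\Gal(f)=\GL_3(\F_2)\cong\PSL_2(\F_7)$ act doubly transitively on the $n=7$ nonzero vectors of $\F_2^3$. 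The heart is $6$-dimensional with $\End=\F_2$ by your orbit count, yet it is \emph{not} simple: the $\Gal(f)$-equivariant map $\delta_v\mapsto v$ surjects it onto the natural $3$-dimensional module, so it is an indecomposable module of length $\ge 2$ with scalar endomorphisms. Note that your argument for (2) nowhere uses that $2$ is a primitive root modulo $n$, so it cannot be complete; that hypothesis is precisely what excludes $n=7$. The repair is the paper's: part (2) assumes $f$ irreducible, so part (1) applies and $Q_{\RR_f}$ is \emph{simple}; a simple module with $\End_{\Gal(f)}(Q_{\RR_f})=\F_2$ has, by Jacobson density, $\F_2[\Gal(f)]\twoheadrightarrow\End_{\F_2}(Q_{\RR_f})$, whence $Q_{\RR_f}\otimes\bar{\F}_2$ is simple. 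This repair also matters downstream: your part (3) invokes exactly this Burnside-type surjectivity.

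Part (3), once (2) is repaired, is correct but takes a genuinely different route from the paper, which at this point simply cites \cite[Th. 2.1]{ZarhinSh03}; you reprove that statement. Your steps are sound: the injection $\Hom(X,Y)/2\Hom(X,Y)\hookrightarrow\Hom_{\F_2}(X[2],Y[2])$ (kernel exactly $2\Hom(X,Y)$, since a homomorphism killing $X[2]$ factors through multiplication by $2$); the surjection $\Gal(K)\twoheadrightarrow\Gal(f)\times\Gal(h)$ coming from linear disjointness; the simplicity of $X[2]^{\vee}\otimes Y[2]$ over the product (density on the first factor, simplicity of $Y[2]$ from part (1) on the second); and the forced equality $\rank_{\Z}\Hom(X,Y)=4\dim X\dim Y$ when $\Hom(X,Y)\neq\{0\}$. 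The equality case of the rank bound --- attained only by abelian varieties isogenous to powers of one supersingular elliptic curve, which forces $\fchar(K)=p>0$ --- is indeed classical (via Albert's classification of endomorphism algebras with positive involution). In substance this is the proof of the cited theorem, so what your version buys is self-containedness, at the price of importing the classical equality-case fact that the citation packages.
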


We prove Proposition \ref{crucial} in Section \ref{crucialP}.

\subsection{Galois module $J(C_f)[2]$}
\label{QRR}
In  this subsection we discuss  a  well known  explicit description of the Galois module $J(C_f)[2]$
for arbitrary (separable) $f(x)$ and (odd) $n$.
This description will be used in the proof of Proposition \ref{crucial}.  Let us start with the $n$-dimensional $\F_2$-vector space
$$\F_2^{\RR_f}=\{\phi:\RR_f \to \F_2\}$$
of all $\F_2$-valued functions on $\RR_f\subset K_s$. The  action of $\Perm(\RR_f)$ on $\RR_f$ provides $\F_2^{\RR_f}$ with the structure of  faithful $\Perm(\RR_f)$-module, which splits into a direct sum
$$\F_2^{\RR_f}=\F_2\cdot {\bf 1}_{\RR_f}\oplus Q_{\RR_f}$$
of the one-dimensional subspace of constant functions $\F_2\cdot {\bf 1}_{\RR_f}$ and the $(n-1)$-dimensional {\sl heart} \cite{Klemm,Mortimer}
$$Q_{\RR_f}=\{\phi:\RR_f \to \F_2\mid \sum_{\alpha\in\RR_f}\phi(\alpha)=0\}$$
(here we use that $n$ is odd). Clearly, the  $\Perm(\RR_f)$-module is faithful. It remains faithful if we view it as the $\Gal(f)$-module.  

The field inclusion $K(\RR_f)\subset K_s$ induces the {\sl surjective} continuous homomorphism
$$\Gal(K)=\Gal(K_s/K)\twoheadrightarrow \Gal(K(\RR_f)/K)=\Gal(f),$$
which gives rise to the natural structure of the $\Gal(K)$-module on $Q_{\RR_f}$ such that the image of $\Gal(K)$ in $\Aut_{\F_2}(Q_{\RR_f})$ coincides with
$$\Gal(f)\subset \Perm(\RR_f)\hookrightarrow \Aut_{\F_2}(Q_{\RR_f}).$$ 
The surjectiveness implies that the $\Gal(f)$-module $Q_{\RR_f}$ is simple (resp. absolutely simple) if and only if it 
is simple (resp. absolutely simple) as the $\Gal(K)$-module.

It is well known (see, e.g., \cite{Mori2,ZarhinTexel}) that the $\Gal(K)$-module $J(C_f)[2]$ and $Q_{\RR_f}$ are canonically  isomorphic. This implies that the groups $\tilde{G}_{2,J(C_f)}$ and $\Gal(f)$ are canonically isomorphic. It is also clear that $K(\RR_f)$ coincides with  $K(J(C_f)[2])$.
In addition, the $\Gal(K)$-module $J(C_f)[2]$ is simple (resp. absolutely simple) if and only if the $\Gal(f)$-module $Q_{\RR_f}$ is simple (resp. absolutely simple).

\subsection{Useful algebraic results}

We finish this section by stating two auxiliary elementary results that will be  used in the proofs of Theorems \ref{endoH} and \ref{mainH}.

\begin{lem}
\label{degSplit}
Let $n$ be a prime, $K$ an arbitrary  field, and $h(x)\in K[x]$ a  polynomial without repeated roots with $\deg(h)\le n$. If $h(x)$ is reducible over $K$ then the degree $[K(\RR_h):K]$ is not divisible by $n$. More precisely, if $l$ is a prime divisor of $[K(\RR_h):K]$ then $l<n$.
\end{lem}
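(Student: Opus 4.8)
The plan is to reduce the statement to the behavior of Galois groups under factorization and to a crude bound on the orders of symmetric groups. Since $h(x)$ is reducible over $K$, write $h(x)=\prod_{i=1}^{r}h_i(x)$ as a product of $r\ge 2$ monic irreducible polynomials $h_i(x)\in K[x]$. Because $h(x)$ has no repeated roots, the factors $h_i(x)$ are separable and pairwise coprime; hence $K(\RR_h)/K$ is a finite Galois extension and $[K(\RR_h):K]=|\Gal(h/K)|$. Put $d_i=\deg(h_i)$, so that $\sum_{i=1}^{r}d_i=\deg(h)\le n$ and each $d_i\ge 1$.

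First I would analyze the action of $\Gal(h/K)$ on the root set $\RR_h$. Since each $h_i(x)$ has coefficients in $K$, every element of $\Gal(h/K)$ carries the roots of $h_i(x)$ to roots of $h_i(x)$; that is, the subsets $\RR_{h_i}\subset\RR_h$, with $|\RR_{h_i}|=d_i$, are each stable under $\Gal(h/K)$. This yields a faithful homomorphism
\[
\Gal(h/K)\hookrightarrow \prod_{i=1}^{r}\Perm(\RR_{h_i}),
\]
and therefore $|\Gal(h/K)|$ divides $\prod_{i=1}^{r}d_i!$.

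The decisive point is that reducibility forces every factor to have degree strictly less than $n$: since $r\ge 2$ and each $d_j\ge 1$, for a fixed $i$ we get $d_i\le \deg(h)-(r-1)\le \deg(h)-1\le n-1<n$. As $n$ is prime and $d_i<n$, every prime appearing in $d_i!$ is at most $d_i$, hence strictly smaller than $n$. Consequently, any prime $l$ dividing $[K(\RR_h):K]=|\Gal(h/K)|$ divides $\prod_{i}d_i!$, hence divides some $d_i!$, and so $l\le d_i<n$. This gives at once that $l<n$ and that $n$ does not divide $[K(\RR_h):K]$, as desired.

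There is essentially no serious obstacle here; the only step meriting a word of justification is the embedding $\Gal(h/K)\hookrightarrow\prod_{i}\Perm(\RR_{h_i})$, which rests on the faithful action of $\Gal(h/K)$ on $\RR_h$ together with the $\Gal(h/K)$-stability of each $\RR_{h_i}$ (a consequence of $h_i\in K[x]$). Everything else is a routine divisibility count, the essential numerical input being that a reducible polynomial of degree at most $n$ has all irreducible factors of degree below the prime $n$.
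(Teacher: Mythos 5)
Your proof is correct, but it follows a genuinely different route from the paper's. The paper embeds $\Gal(h/K)$ into the single symmetric group $\Perm(\RR_h)$ and argues by contradiction via Cauchy's theorem: if $n$ divided $[K(\RR_h):K]$, the group would contain an element of prime order $n$, which inside a symmetric group on at most $n$ letters must be an $n$-cycle; this forces transitivity on $\RR_h$, hence irreducibility of $h(x)$, a contradiction. The tail of the argument ($N$ divides $n!$ and $n\nmid N$, so every prime divisor of $N$ is $<n$) then gives the refined claim. You instead exploit the factorization $h=\prod_i h_i$ directly: each root set $\RR_{h_i}$ is $\Gal(h/K)$-stable, so the group embeds in $\prod_i \Perm(\RR_{h_i})$ and its order divides $\prod_i d_i!$, where reducibility forces every $d_i\le n-1$; primality of $n$ then makes the divisibility count immediate. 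Your argument is more elementary (Lagrange plus basic Galois theory, with no appeal to Cauchy's theorem or to cycle types) and it yields the slightly stronger intermediate fact that $[K(\RR_h):K]$ divides $\prod_i d_i!$; the paper's argument is shorter on the page and isolates the conceptual point that divisibility of the group order by $n$ already forces transitivity, i.e., irreducibility. Both establish the full statement, including the refinement that every prime divisor $l$ of $[K(\RR_h):K]$ satisfies $l<n$.
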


\begin{lem}
\label{divisionA}

Let $n$ and $p$ be distinct  primes. Assume that $n$ is odd,  i.e., $m:=(n-1)/2$ a positive integer. Let $D$ be a quaternion  algebra over $\Q$ that is ramified at $p$, i.e., $D\otimes_{\Q}\Q_p$ is a  division algebra over the field $\Q_p$ of $p$-adic numbers. Let $E$ be a commutative $\Q$-subalgebra of $\mathrm{Mat}_m(D)$ (with the same $1$) that is isomorphic to the $n$th cyclotomic field $\Q(\zeta_{n})$.

Then $p \bmod n$ has even multiplicative order in $(\Z/n\Z)^{*}$. In particular, $n$ does not divide $p-1$.
\end{lem}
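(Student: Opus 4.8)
The plan is to recognize $E\cong\Q(\zeta_n)$ as a \emph{maximal subfield} of the central simple $\Q$-algebra $A:=\Mat_m(D)$, translate its embeddability into a splitting condition for $D$, and then analyze this condition locally at $p$. First I would record the relevant dimension count. Since $D$ is a quaternion algebra, $\dim_\Q A = 4m^2 = (2m)^2$, so $A$ is a central simple $\Q$-algebra of degree $2m$. On the other hand, $[E:\Q]=[\Q(\zeta_n):\Q]=n-1=2m$. Thus $E$ is a commutative subfield of $A$ whose degree over $\Q$ equals the degree of $A$, i.e. a maximal subfield.

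Next I would invoke the standard criterion from the theory of central simple algebras: a subfield $E$ of a central simple $\Q$-algebra $A$ with $[E:\Q]=\deg A$ is necessarily a splitting field, so that $A\otimes_\Q E\cong\Mat_{2m}(E)$. Because $A=\Mat_m(D)$ is Brauer-equivalent to $D$, it follows that $E\cong\Q(\zeta_n)$ splits the quaternion algebra $D$. This reduction from ``$E$ embeds in $\Mat_m(D)$'' to ``$E$ splits $D$'' is the crux of the argument, and it depends essentially on the equality $[E:\Q]=\deg(\Mat_m(D))$.

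The argument then becomes local at $p$. Splitting $D$ globally implies that for every place $w$ of $E$ lying above $p$, the completion $E_w$ splits $D\otimes_\Q\Q_p$. By hypothesis $D$ is ramified at $p$, so $D\otimes_\Q\Q_p$ is a division quaternion algebra over $\Q_p$, with local Hasse invariant $1/2\in\Q/\Z$. For a finite extension $E_w/\Q_p$ one has $\mathrm{inv}\bigl((D\otimes_\Q\Q_p)\otimes_{\Q_p}E_w\bigr)=[E_w:\Q_p]\cdot\tfrac12 \in\Q/\Z$, which vanishes precisely when $[E_w:\Q_p]$ is even. Hence $[E_w:\Q_p]$ is even for every $w\mid p$.

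Finally I would compute this local degree. Since $p\ne n$, the prime $p$ is unramified in $\Q(\zeta_n)$, so each extension $E_w/\Q_p$ is unramified and $[E_w:\Q_p]$ equals its residue degree, which is exactly the order of the Frobenius at $p$ in $\Gal(\Q(\zeta_n)/\Q)\cong(\Z/n\Z)^{*}$, i.e. the multiplicative order $\mathfrak{f}_p$ of $p\bmod n$. Comparing with the previous step yields that $\mathfrak{f}_p$ is even. For the last assertion: if $n\mid p-1$ then $p\equiv 1\bmod n$, whence $\mathfrak{f}_p=1$ is odd, contradicting what was just shown; therefore $n\nmid p-1$. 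The only nonroutine inputs are the maximal-subfield splitting criterion and the local invariant computation, both standard; no delicate estimate is needed once these are in place.
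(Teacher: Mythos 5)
Your proof is correct and takes essentially the same route as the paper's: recognize $E\cong\Q(\zeta_n)$ as a (strictly) maximal subfield of $\Mat_m(D)$, conclude it splits the algebra (hence $D$), and then localize at $p$ to force the local degree $[E_w:\Q_p]=\mathfrak{f}_p$ to be even. The only difference is cosmetic and lies in the local step: you invoke the Hasse invariant formula $\mathrm{inv}\bigl(D_p\otimes_{\Q_p}E_w\bigr)=[E_w:\Q_p]\cdot\tfrac12$, whereas the paper computes the relative Brauer group of the unramified extension as the norm-index quotient $\Q_p^{*}/\mathrm{N}(L_{\mathfrak{P}}^{*})$ of order $[L_{\mathfrak{P}}:\Q_p]$ --- two equivalent formulations of the same local class field theory fact.
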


\begin{rem}
Keeping the notation and assumptions  of Lemma \ref{divisionA}, assume additionally that  $n=3$. (This is exactly the case that we need for elliptic curves.)
Then 
$$p \ne 3, \ \Q(\zeta_3)=\Q(\sqrt{-3}), \ m=1,  \mathrm{Mat}_m(D)=D.$$

 Hence,
$\Q(\sqrt{-3})\otimes_{\Q}\Q_p$ is isomorphic to a subalgebra of the division algebra $D\otimes_{\Q}\Q_p$. This implies that $\Q(\zeta_3)\otimes_{\Q}\Q_p$ has no zero divisors, i.e., $p$ is {\sl inert} in $\Q(\sqrt{-3})$, which means that $p-1$ is {\sl not} divisible by $3$. Hence, $p \equiv 2 \bmod 3$  and therefore has multiplicative order $2$ in $(\Z/3\Z)^{*}$.
This proves Lemma \ref{divisionA} for $n=3$.
\end{rem}

We will prove Lemmas \ref{degSplit} and \ref{divisionA} in Sections \ref{crucialP} and \ref{cyclicProof} respectively.

\section{Non-isogenous hyperelliptic jacobians: proofs}
\label{mainproof} 

\begin{proof}[Proof of Theorem \ref{endoH}]
We may assume that $f(x)$ is irreducible over $K$ and $h(x)$ is reducible over $K$.

By Proposition \ref{crucial}(1), 
$$\Gal(K(J(C_f)[2]))/K)=\Gal(K(\RR_f)/K)$$ 
contains a cyclic subgroup $H$ of order $n$.
Replacing $K$ by its overfield $K(\RR_f)^H$ (of $H$-invariants in $K(\RR_f)$), we may and will assume that 
$$\Gal(K(J(C_f)[2])/K)=\Gal(K(\RR_f)/K)=H$$ is a cyclic group of (prime odd) order $n$.  By Lemma \ref{degSplit}, the degree of Galois extension
$K(\RR_h)/K$ is not divisible by $n$. This implies that the fields $K(J(C_f)[2])$ and $K(\RR_h)$ are linearly disjoint over $K$. Replacing $K$ by  its overfield $K(\RR_h)$,
we may and will assume that
$$K(J(C_h)[2])=K(\RR_h)=K$$
and $\Gal(K(J(C_f)[2])/K)$ is a cyclic group of order $n$. In particular, 
$$[K(J(C_f)[2]):K]=n.$$  Now the desired result follows from Proposition
\ref{isogEll} applied to $X=J(C_f)$ and $Y=J(C_h)$.
\end{proof}

\begin{proof}[Proof of Theorem \ref{mainH}]
In light of Proposition \ref{crucial}(3), we may and will assume that $p=\fchar(K)>0$ and both $J(C_f)$ and $J(C_h)$ are $(n-1)/2$-dimensional supersingular abelian varieties, which are isogenous.  
We may also  assume that $p \ne n$.

The linear disjointness of the splitting fields (property (iii) ) means that
$$\Gal(f/K(\RR_h))=\Gal(f/K)\subset \Perm(\RR_f).$$
In particular, $f(x)$ remains irreducible over $K(\RR_h)$. So,  replacing  $K$ by its overfield $K(\RR_h)$, we may and will assume that $f(x)$ is irreducible  over $K$ and
 $h(x)$ splits into a product of linear factors, i.e.,
the $\Gal(K)$-module $J(C_f)[2]$ is simple (thanks to Proposition \ref{crucial}(1)) while
$$K=K(\RR_h)=K(J(C_h)[2]).$$
The irreducibility of $f(x)$ implies that $[K(\RR_f):K]$ is divisible by $\deg(f)=n$.
Applying Proposition \ref{isogEll} to 
$X=J(C_f), Y=J(C_h)$, we conclude that
$\End^0(J(C_h))$ contains an invertible element, say $c$, of multiplicative order $n$ such that
the $\Q$-subalgebra $\Q[c] \cong\Q(\zeta_n)$.
The supersingularity of $J(C_h)$ implies that $\End^0(J(C_h))$ is isomorphic to the matrix algebra $\mathrm{Mat}_{m}(D_{p,\infty})$ of size $m:=(n-1)/2$ over a definite quaternion algebra $D_{p,\infty}$ such that $D_{p,\infty}$ is ramified precisely at $p$ and $\infty$ \cite[Th. 2d]{Tate}. We obtain that $\mathrm{Mat}_{m}(D_{p,\infty})$  contains a subfield (with the same $1$) that is isomorphic to $\Q(\zeta_n)$. 

Now the desired result follows from Lemma \ref{divisionA}  applied to 
$D=D_{p,\infty}$.
\end{proof}

\section{Proofs of auxiliary results}
\label{crucialP}

\begin{proof}[Proof of Proposition \ref{BourbakiLang}]
Suppose that  the Galois extensions $K(\RR_f)$ and $K(\RR_h)$ of $K$ are {\sl not} linearly disjoint over $K$.
In light of  \cite[A.V.71, Th. 5]{Bourbaki} (see also \cite[Ch. VI, Th. 1.14]{Lang}),
 this means that the field 
$$L:=K(\RR_f)\cap K(\RR_h) \ne K.$$ 

Clearly,
$$K \subset L \subset K(\RR_h).$$

Since the degree $[K(\RR_h):K]=n$ is a prime, $L=K(\RR_h)$, i.e.,
$$K(\RR_f)\cap K(\RR_h)=L=K(\RR_h),$$
which means that $K(\RR_h)\subset K(\RR_f)$. This implies that
$\Z/n\Z\cong \Gal(h)$ is isomorphic to a quotient of $\Gal(f)$, i.e.,
there exists a surjective group homomorphism
$$\psi: G:=\Gal(f) \twoheadrightarrow \Z/n\Z.$$

Suppose that $\Gal(f)$ is {\sl doubly transitive}. We need to arrive to a contradiction.
Let us choose a root $\alpha\in \RR_f$ of $f(x)$  and let $G_{\alpha}$ be the {\sl stabilizer} of $\alpha$ in $G$.
Since $G$ is transitive,  $G_{\alpha}$ is a subgroup of index $n$ in $G$. Since  $n$ is a {\sl prime} and the permutation group 
$$G=\Gal(\RR_f/K)\subset \Perm(\RR_f)$$ 
is isomorphic to
a subgroup of $\mathbf{S}_n$,  the order of $G$ is {\sl not} divisible by $n^2$. This implies that the order of $G_{\alpha}$  is not divisible by $n$,
i.e., is prime to $n$. It follows
that  the order of the image $\psi(G_{\alpha})$ is prime to $n$ as well. Since $\psi(G_{\alpha})$ is a subgroup 
of $\Z/n\Z$,  we get $\psi(G_{\alpha})=\{0\}$, i.e., 
$$G_{\alpha}\subset \ker(\psi) \ \forall \alpha \in \RR_f.$$

 The surjectiveness of $\psi$
implies that the index of $\ker(\psi)$ in $G$ is also $n$. Hence, both  $G_{\alpha}$ and $\ker(\psi)$ have the same order.
Now, the inclusion $G_{\alpha}\subset \ker(\psi)$ implies that 
$$G_{\alpha}=\ker(\psi) \ \forall \alpha \in \RR_f.$$
 
It follows that $ \ker(\psi)$
lies in all the stabilizers $G_{\alpha}$, i.e., $\ker(\psi)$ consists only of the trivial permutation, which sends
every $\alpha\in\RR_f$ to $\alpha$. Hence, the surjective homomorphism $\psi$ is an isomorphism, i.e.,
$\Gal(f) \cong \Z/n\Z$. In particular, the order of $\Gal(f)$ is $n$. However, $\Gal(f)$ is a {\sl doubly transitive}
permutation group of the $n$-element set $\RR_f$. Hence, the order of $\Gal(f)$ is at least $n(n-1)$, which is strictly greater than $n$, because $n \ge 3$.
The obtained contradiction proves the desired result.
\end{proof}

\begin{proof}[Proof of Lemma \ref{simpleX2}]
First,
$$\dim_{\F_2}(X[2])=2g=2\cdot \frac{n-1}{2}=n-1.$$
Second, since $[K(X[2]):K]$ is divisible by the prime $n$,
the Galois group $\tilde{G}_{2,X}=\Gal(K(X[2])/K)$ contains a cyclic subgroup  $H_2$ of order $n$.
Our assumptions on $n$ imply that the faithful representation of $H_2\cong \Z/n\Z$
on the $(n-1)$-dimensional $\F_2$-vector space $X[2]$ is irreducible, see \cite[Lemma 2.8]{Pip}.  Since $H_2$ is a subgroup of $\tilde{G}_{2,X}$,
 the $\tilde{G}_{2,X}$-module $X[2]$ is also simple, which means that the $\Gal(K)$-module $X[2]$ is simple as well.
\end{proof}

\begin{proof}[Proof of Proposition \ref{isogEll}]
There are field inclusions
\begin{equation}
\label{fieldI}
 K \subset K(X[2])\subset K(X[4]).
 \end{equation}
Since $[K(X[2]):K]$ is divisible by $n$, the degree $[K(X[4]):K]$ is also divisible by $n$.
Hence, the Galois group $\Gal(K(X[4)]/K)$ contains a cyclic subgroup  $H$ of order $n$.
Replacing $K$ by   its overfield  $K(X[4])^{H}$ (of $H$-invariants), we may and will assume that $K(X[4])/K$ is a degree $n$ cyclic  extension,
i.e.,  a Galois extension with Galois group $\Gal(K(X[4])/K) \cong \Z/n \Z$.  On the other hand, the field inclusions \eqref{fieldI} give rise to the short exact sequence of Galois groups
\begin{multline}
\{1\} \to  \Gal(K(X[4])/ K(X[2])\to \Gal(K(X[4)])/K)  \\* \to \Gal(K(X[2)])/K)\to \{1\}.\notag
\end{multline}
In particular, $\Gal(K(X[4])/ K(X[2]))$ is a   subgroup of  $\Gal(K(X[4])/K)$ and therefore the order of $\Gal(K(X[4])/ K(X[2]))$
divides the order of $\Gal(K(X[4)])/K)$, which is  {\sl odd} $n$.
 We know 
  that $\Gal(K(X[4])/ K(X[2]))$ is a finite $2$-group (see Remark \ref{silver}). This implies that the order of $\Gal(K(X[4])/ K(X[2]))$ is $1$. In light of the exact sequence,
 we  get that $K(X[4])=K(X[2])$; in particular, 
 $\Gal(K(X[2)])/K)$ is a cyclic group of order $n$ and therefore  $[K(X[2)]):K]=n$.

Recall (Remark \ref{silver}) that $K(Y[4])/K(Y[2])$ is a finite Galois extension, whose degree is a power of $2$. Since 
$K(Y[2])=K$ and $n$ is an odd prime, the fields $K(Y[4])$ and $K(X[4])$ are linearly disjoint over $K$. Replacing $K$ by its overfield $K(Y[4])$, we may and will assume that
$K=K(Y[4])$ and $K(X[4])=K(X[2])$ is still a cyclic  degree $n$ extension of  $K$.  In light of already proven Lemma \ref{simpleX2},
 the $\Gal(K)$-module $X[2]$ is simple.

It follows from the theorem of Silverberg (see Remark \ref{silver} above) applied to $X, Y$ and $X\times Y$  that all the endomorphisms of $Y$ are defined over $K$ and all the homomorphisms from $X$ to $Y$ are defined over $K(X[4])=K(X[2])$.

Suppose that $X$ and $Y$ are isogenous over $\bar{K}$.
Let $\phi: X \to Y$ be an isogeny, which, as we know, is defined over $K(X[4])$. We may assume that $\phi$ has the smallest possible degree; in particular, $\phi$ is {\sl not} divisible by $2$ in $\Hom(X,Y)$.
If $\phi$ is defined over $K$ then it induces a {\sl nonzero} homomorphism of $\Gal(K)$-modules $X[2] \to Y[2]$, which is nonzero, because $\phi$ is {\sl not} divisible by $2$. However, the module $X[2]$ is simple while the module $Y[2]$ is trivial (as a Galois representation), because $K(Y[2])=K$. Hence,  a nonzero homomorphism of these Galois modules does {\sl not} exist and therefore $\phi$ is {\sl not} defined over $K$.   Since both $X$ and $Y$ are defined over $K$,  to each $\sigma \in \Gal(K(X[4])/K) \cong \Z/n Z$ corresponds the isogeny
$\sigma(\phi): X \to Y$, which does {\sl not} coincide with $\phi$ if $\sigma$ is {\sl not} the identity element of $\Gal(K(X[4])/K) $.
Since both $\phi$ and $\sigma(\phi)$ are isogenies from $X$ to $Y$,  there exists precisely one $a_{\sigma}\in \End^0(Y)^{*}$ such that
$$\sigma(\phi)=a_{\sigma} \phi \ \mathrm{in} \ \Hom(X,Y)\otimes \Q.$$
If $\tau \in \Gal(K(X[4])/K) $ then
$$a_{\sigma\tau}(\phi)=(\sigma \tau)\phi =\sigma (\tau(\phi))=\sigma (a_{\tau}\phi)=\sigma (a_{\tau})\sigma(\phi).$$
Since all the endomorphisms of $Y$ are defined over $K$, we have $\sigma (a_{\tau})=a_{\tau}$ and therefore
$$a_{\sigma\tau}(\phi)=a_{\tau}\sigma(\phi)=a_{\tau}(a_{\sigma} \phi)=(a_{\tau}a_{\sigma}) \phi$$
for all $\sigma, \tau \in  \Gal(K(X[4])/K)$. Since  $\Gal(K(X[4])/K) $ is commutative,
$$a_{\tau\sigma}\phi=a_{\sigma\tau}\phi=(a_{\tau}a_{\sigma}) \phi.$$
Since $\phi$ is an isogeny, we get
$$a_{\tau\sigma}=a_{\tau}a_{\sigma} \ \forall \sigma,\tau \in  \Gal(K(X[4])/K).$$
In other words, the map 
$$\Gal(K(X[4])/K) \to \End^0(Y)^{*}, \ \sigma \mapsto a_{\sigma}$$ 
is a group homomorphism, which, as we know, is nontrivial. Take any non-identity element $\sigma \in \Gal(K(X[4])/K)$ and put
$$c=a_{\sigma} \in \End^0(Y)^{*}.$$
Then $c$ is a non-identity element of $\End^0(Y)^{*}$. In addition,   $c^{n}=1$, because the order of $\sigma$ is $n$.
Since $n$ is a prime, the quotient-algebra $\Q[T]/(T^{n}-1)$ of the ring of polynomials $\Q[T]$ is isomorphic to the direct sum
$\Q(\zeta_{n})\oplus\Q$. Hence, the $\Q$-subalgebra $\Q[c]$ of $\End^0(Y)$ is isomorphic to a quotient of $\Q(\zeta_{n})\oplus\Q$;
in particular, it is a {\sl semisimple commutative} $\Q$-(sub)algebra. Notice that
$$2\dim(Y)=n-1<(n-1)+1=\dim_{\Q}\left(\Q(\zeta_{n})\oplus\Q\right).$$
It follows from 
\cite[Ch. II, Prop. 1 on p. 36]{Shimura} that
$$\dim_{\Q}(\Q[c])\le 2\dim(Y)=n-1<\dim_{\Q}(\Q(\zeta_{n})\oplus\Q);$$
in particular, $\Q[c]$ is not isomorphic to $\Q(\zeta_{n})\oplus\Q$.
Since $c$ has odd multiplicative order $n>2$, $\Q[c]$ is not isomorphic to $\Q$.
The only remaining possibility is that there there is an isomorphism of $\Q$-algebras
$\Q[c]\cong \Q(\zeta_{n})$. 
This gives as a $\Q$-algebra embedding
$$\Q(\zeta_{n})\cong \Q[c]\subset \End^0(Y).$$
Since
$$2\dim(Y)=n-1=[\Q(\zeta_{n}):\Q]=\dim_{\Q}(\Q(\zeta_{n})),$$
$Y$ is an abelian variety of CM type over $\bar{K}$ with multiplication by $\Q(\zeta_{n})$.  Since $X$ is $\bar{K}$-isogenous to $Y$,
it is also of CM type with  multiplication by $\Q(\zeta_{n})$.
\end{proof}

\begin{proof}[Proof  of Proposition \ref{crucial}]

Assertion (0) is already proven in Subsection \ref{QRR}. In order to prove (1), notice that the irreducibility of $f(x)$ means the transitivity of permutation group 
$$\Gal(f)=\Gal(f/K)\subset\Perm(\RR_f).$$
Since $n=\#(\RR_f)$ is a prime, the transitivity implies that $n$ divides the order of $\Gal(f)$.
Let 
$$H\subset \Gal(f/K)\subset \Perm(\RR_f)$$
 be a cyclic subgroup of prime order $n$ in $\Gal(f/K)$ and $K_1=K(\RR_f)^H$ the  subfield of $H$-invariants in $K(\RR_f)$. Then
 $$K \subset K_1\subset K(\RR_f), \ K_1(\RR_f)=K(\RR_f);$$
 $$ \Gal(f/K_1)=H \subset \Gal(f/K)\subset \Perm(\RR_f).$$
 Recall that $n$ is a {\sl prime} and $2$ is a {\sl primitive root} $\bmod \ n$, i.e., $2\bmod\ n$ has order $n-1$ in the multiplicative group $(\Z/n\Z)^{*}$.  Since 
 $$\Gal(f/K_1)\cong \Z/n\Z  \ \text{ and } \ n-1=\dim_{\F_2}(Q_{\RR_f}), $$  
 the $\Gal(f/K_1)$-module $Q_{\RR_f}$ is simple, thanks  to \cite[Lemma 2.8]{Pip} (applied to $n=2, p=n, s=n-1$). Since $\Gal(f/K_1)\subset \Gal(f/K)$, the $\Gal(f/K)$-module $Q_{\RR_f}$ is also simple and therefore the $\Gal(K)$-module $Q_{\RR_f}$ is simple as well.  This implies that the $\Gal(K)$-module $J(C_f)[2]$ is also simple, thanks to the arguments at the end of Subsection \ref{QRR}.
 
 Let us prove (2).  We are given that $\Gal(f)$ is {\sl doubly transitive}.  Since $n$ is {\sl odd},  the centralizer $\End_{\Gal(f)}(Q_{\RR_f})$ coincides with $\F_2$,
 thanks to \cite[Satz 4a]{Klemm} (see also \cite[p. 108]{Mori2}).  This implies that the simple $\Gal(f)$-module $Q_{\RR_f}$ is absolutely simple and therefore the $\Gal(K)$-module $J(C_f)[2]$ is also absolutely  simple, thanks to the arguments at the end of Subsection \ref{QRR}.
 
 Let us prove (3). We are given that the fields $K(J(C_f)[2])=K(\RR_f)$ and $K(J(C_h)[2])=K(\RR_h)$ are linearly disjoint over $K$. By already proven assertions (1) and (2) combined with Remark \ref{tildeG}, the $\Gal(K)$-module $J(C_f)[2]$ is absolutely simple while 
 the $\Gal(K)$-module $J(C_h)[2]$ is  simple.  Now the desired result follows from Theorem 2.1 of \cite{ZarhinSh03} (applied to $n=2$ and $X=J(C_f), Y=J(C_h)$).
\end{proof}

\begin{proof}[Proof of Lemma \ref{degSplit}]
 We may view $\Gal(\RR_h)$ as the certain subgroup of $\Perm(\RR_h)$. If the order $N$ of $\Gal(\RR_h)$ is divisible by $n$ then $\Gal(\RR_h)$ contains an element of order $n$ (recall that $n$ is a prime), which is a $n$-cycle. This implies that 
 $\Gal(\RR_h)$
  acts transitively on $\RR_h$, which contradicts the reducibility of $h(x)$. Hence, $N$ is {\sl not} divisible by $n$. On the other hand, $N$ obviously divides $n!$. This implies that
 all prime divisors of $N$ are strictly less than $n$. It remains to notice that 
 $N=[K(\RR_h):K]$.

\end{proof}

\section{Central simple algebras and cyclotomic fields}
\label{cyclicProof}
The aim of this section is to prove  Lemma \ref{divisionA}.
We start with some basic facts related to cyclotomic fields.

\begin{rem}
\label{multOrder}
Suppose that   $p$ and $n$ are {\sl distinct} primes. Let  $\mathcal{O}$ be the ring of integers in the $n$th cyclotomic field $L:=\Q(\zeta_n)$ and $\mathfrak{P}$ a maximal ideal of $\mathcal{O}$ that contains $p  \mathcal{O}$. Let $L_{\mathfrak{P}}$ be the completion of $L$ with respect to the $\mathfrak{P}$-adic topology. 
\begin{itemize}
\item[(i)]
By definition, $L_{\mathfrak{P}}$ contains the fields  $L$ and $\Q_p$ (which coincides with the completion of $\Q$ with respect    to the $\mathfrak{P}$-adic topology).
 Since $n$ is a prime, the field extension  $L/\Q=\Q(\zeta_n)/\Q$ is cyclic
and therefore the field extension $L_{\mathfrak{P}}/\Q_p$ is also cyclic.
\item[(ii)]
 Since the primes $p$ and $n$ are distinct, the field extension $L_{\mathfrak{P}}/\Q_p$ is {\sl unramified} and therefore the degrees of the field extensions
$L_{\mathfrak{P}}/\Q_p$ and $\left(\mathcal{O}/\mathfrak{P}\right)/\left( \Z/p\Z\right)$ do coincide.
\item[(iii)]
In light of \cite[Sect. 1, Lemma 4 and its proof]{ANT}, the residual degree 
$\left[\mathcal{O}/\mathfrak{P}: \Z/p\Z\right]$ coincides with 
the multiplicative order $\mathfrak{f}_p$ of $p \bmod n \in (\Z/n\Z)^{*}$.
This implies that 
\begin{equation}
\label{fp}
\mathfrak{f}_p=\left[L_{\mathfrak{P}}:\Q_p\right].
\end{equation} 
\end{itemize}
\end{rem}
\begin{sect}
We also need to recall basic facts about central simple algebras \cite[Sect. 13.1 and  15.1]{Pierce}.
Let  $\mathcal{K}$ be a field.  If  $\mathcal{A}$ is a central simple algebra over $\mathcal{K}$ then we write $[\mathcal{A}]$
for the similarity class of $\mathcal{A}$ in the {\sl Brauer group} $\mathrm{Br}(\mathcal{K})$.  If $\phi: \mathcal{K} \hookrightarrow \mathcal{L}$
is an {\sl embedding of fields}  then there is the natural group
homomorphism \cite[Sect. 12.5]{Pierce}
\begin{equation}
\phi_{*}: \mathrm{Br}(\mathcal{K}) \to \mathrm{Br}(\mathcal{L}), \ [\mathcal{A}] \to  [\mathcal{A}\otimes_{\mathcal{K}}\mathcal{L}].
\end{equation}

Recall that the correspondences 
$$\mathcal{K} \mapsto \mathrm{Br}(\mathcal{K}), \ \phi \mapsto \phi_{*}$$
define a {\sl functor} from the category of fields to the category of commutative groups  \cite[Sect. 12.5, Proposition {\bf c}]{Pierce}.
In particular, if  $\psi: \mathcal{L} \hookrightarrow \mathcal{F}$ is a {\sl field embedding} then the group homomorphism 
$(\psi\phi)_{*}: \mathrm{Br}(\mathcal{K})  \to \mathrm{Br}(\mathcal{F})$ coincides with the {\sl composition}
\begin{equation}
\label{functor}
\psi_{*} \circ\phi_{*}:  \mathrm{Br}(\mathcal{K})  \to  \mathrm{Br}(\mathcal{L}) \to \mathrm{Br}(\mathcal{F}).
\end{equation} 
 In addition, if $\psi$ is a {\sl field isomorphism}
then $\psi_{*}$ is a {\sl group isomorphism}.

If $\mathcal{L}$ is an {\sl overfield} of $\mathcal{K}$ then we write 
$$i(\mathcal{K},\mathcal{L}): \mathcal{K} \subset \mathcal{L}$$
for the {\sl inclusion map}. We write $\mathbf{B}(\mathcal{L}/\mathcal{K})$ for the {\sl relative Brauer group} of $\mathcal{L}/\mathcal{K}$, i.e., for the {\sl kernel}  of 
the group homomorphism
$$i(\mathcal{K},\mathcal{L})_{*}: \mathrm{Br}(\mathcal{K})  \to  \mathrm{Br}(\mathcal{L}).$$ 
\cite[Sect. 13.2]{Pierce}. If $\mathcal{F}$ is an {\sl overfield} of $\mathcal{L}$ then,  by functoriality \eqref{functor},
\begin{equation}
\label{compBrauer}
i(\mathcal{K},\mathcal{F})_{*}=i(\mathcal{L},\mathcal{F})_{*}\circ i(\mathcal{K},\mathcal{L})_{*}.
\end{equation}
If $\mathcal{L}_1$ and $\mathcal{L}_2$ are {\sl overfields} of $\mathcal{K}$ and there is a field
isomorphism $\psi: \mathcal{L}_1 \to \mathcal{L}_2$, whose restriction to $\mathcal{K}$  coincides with the
{\sl identity map} then
$$ i(\mathcal{K},\mathcal{L}_2)=\psi\circ  i(\mathcal{K},\mathcal{L}_1)$$ and therefore
$$ i(\mathcal{K},\mathcal{L}_2)_{*}=\psi_{*}\circ  i(\mathcal{K},\mathcal{L}_1)_{*}.$$

Since $\psi_{*}: \mathrm{Br}(\mathcal{L}_1) \to \mathrm{Br}(\mathcal{L}_2)$ is a group isomorphism,
the kernels of $i(\mathcal{K},\mathcal{L}_2)_{*}$ and $i(\mathcal{K},\mathcal{L}_1)_{*}$ do coincide, i.e.
\begin{equation}
\label{relativeB}
\mathbf{B}(\mathcal{L}_1/\mathcal{K})=\mathbf{B}(\mathcal{L}_2/\mathcal{K}).
\end{equation}

\begin{defn}
Let $\mathcal{A}$ be a central simple algebra over $\mathcal{K}$ of finite dimension $d^2$ where $d$ is a positive integer.
A $\mathcal{K}$-subalgebra $E$ of $\mathcal{A}$ with the same $1$ that is a subfield is called a {\sl strictly maximal subfield} if $[E:\mathcal{K}]=d$.
(See \cite[Sect. 13.1]{Pierce}.) 
\end{defn}

\begin{rem}
\label{StrictSplit}
It is known \cite[Sect. 13.3]{Pierce} that if $E$ is a  strictly maximal subfield of a central simple $\mathcal{K}$-algebra $\mathcal{A}$ 
then $E$ {\sl splits} $\mathcal{A}$, i.e., the $E$-algebra $\mathcal{A}\otimes_{\mathcal{K}}E$ is isomorphic to a matrix algebra over $E$.
In other words, $[\mathcal{A}]\in \mathbf{B}(E/\mathcal{K})$.
\end{rem}
\end{sect}

\begin{proof}[Proof of Lemma \ref{divisionA}]
 
We keep the notation and conventions of Remark \ref{multOrder}.
Let us put $\mathcal{A}:=\Mat_m(D)$. Then 
$\mathcal{A}$ is a central simple $\Q$-algebra of dimension 
$$4 \cdot m^2=(2m)^2=(n-1)^2$$
and therefore $E$ is a {\sl strictly maximal subfield}  of the central simple $\Q$-algebra $\mathcal{A}$, because
$$[E:\Q]=[\Q(\zeta_n):\Q]=n-1=\sqrt{\dim_{\Q}(\mathcal{A})}.$$

This implies that
$[\mathcal{A}] \in \mathbf{B}(E/\Q)$. In light of \eqref{relativeB},
$[\mathcal{A}] \in \mathbf{B}(L/\Q),$
 because the fields  $E$ and $L$ are isomorphic (recall that $L=\Q(\zeta_n)$).  In other words, 
\begin{equation}
\label{brauer0}
i(\Q,L)_{*}[\mathcal{A}]=0.
\end{equation}

Now let us consider the central simple $\Q_p$-algebra
$$\mathcal{A}_p:=\mathcal{A}\otimes_{\Q}\Q_p=\Mat_m(D)\otimes_{\Q}\Q_p=\Mat_m(D_p)$$
where
$D_p:=D\otimes_{\Q}\Q_p$ is a quaternion (division) algebra over $\Q_p$. This implies that
$\mathcal{A}_p$ is {\sl not} isomorphic to a matrix algebra over $\Q_p$ but $\mathcal{A}_p \otimes_{\Q_p}\mathcal{A}_p$ is isomorphic to a matrix algebra over $\Q_p$.
In other words, 
$$[\mathcal{A}_p]=i(\Q,\Q_p)_{*}[\mathcal{A}]$$
 is an element of order $2$ in $\mathrm{Br}(\Q_p)$.  Let us prove that
\begin{equation}
\label{brauerC}
[\mathcal{A}_p]\in \mathbf{B}(L_{\mathfrak{P}}/\Q_p)
\end{equation}
where the field $L_{\mathfrak{P}}$ is  defined in Remark \ref{multOrder}. Indeed,  recall that
$L_{\mathfrak{P}}$ contains both fields $\Q_p$ and $L=\Q(\zeta_n)$, whose intersection contains $\Q$.
By functoriality of Brauer groups \eqref{compBrauer},
\begin{equation}
\label{functorB}
i(\Q_p, L_{\mathfrak{P}})_{*}\circ i(\Q,\Q_p)_{*}=i(\Q, L_{\mathfrak{P}})_{*}=i(L, L_{\mathfrak{P}})_{*}  \circ i(\Q, L)_{*}.
\end{equation}
This implies that 
\begin{equation}
\label{brauer2}
i(\Q_p, L_{\mathfrak{P}})_{*}[\mathcal{A}_p]=i(\Q_p, L_{\mathfrak{P}})_{*}\circ \left(i(\Q,\Q_p)_{*} [\mathcal{A}]\right)=
i(L, L_{\mathfrak{P}})_{*}\circ \left(i(\Q,L)_{*}[\mathcal{A}]\right).
\end{equation}

In light of \eqref{brauer0},  $i(\Q,L)[\mathcal{A}]=0$. In light  of \eqref{brauer2},  
$i(\Q_p, L_{\mathfrak{P}})[\mathcal{A}_p]=0$, which proves \eqref{brauerC}. 

It follows that $\mathbf{B}(L_{\mathfrak{P}}/\Q_p)$ contains an element of order $2$, namely $[\mathcal{A}_p]$.
Since $L_{\mathfrak{P}}/\Q_p$ is a {\sl cyclic} field extension (see Remark \ref{multOrder}),
the group 
$\mathbf{B}(L_{\mathfrak{P}}/\Q_p)$
 is isomorphic to the quotient
$\Q_p^{*}/\mathrm{N}_{L_{\mathfrak{P}}/\Q_p}(L^{*})$ where
$$\mathrm{N}_{L_{\mathfrak{P}}/\Q_p}: L_{\mathfrak{P}}^{*} \to \Q_p^{*}$$
is the {\sl norm map} attached to  $L/\Q_p$ and $\mathrm{N}_{L_{\mathfrak{P}}/\Q_p}(L_{\mathfrak{P}}^{*})$ is its image in $\Q_p^{*}$
\cite[Sect. 15.1, Proposition {\bf b}]{Pierce}.
On the other hand,  according to the {\sl fundamental equality} in local class field theory  \cite[Sect. 7.1, p. 98]{Iwasawa}, the index
$\left[\Q_p^{*}:\mathrm{N}_{L_{\mathfrak{P}}/\Q_p}(L_{\mathfrak{P}}^{*})\right]$
coincides with the degree $[L_{\mathfrak{P}}:\Q_p]$. 
\begin{footnote}
{Actually, we need only a special ``elementary'' case of the fundamental equality that deals
with {\sl unramified} extensions  and is discussed in detail  in \cite[Ch. V,  \S 2]{SerreCorps}.}
\end{footnote}
 This means that the quotient $\Q_p^{*}/\mathrm{N}_{L_{\mathfrak{P}}/\Q_p}(L_{\mathfrak{P}}^{*})$
is a finite group of order  $[L_{\mathfrak{P}}:\Q_p]$. Since $\Q_p^{*}/\mathrm{N}_{L_{\mathfrak{P}}/\Q_p}(L_{\mathfrak{P}}^{*})$ contains an element of order 2,
the degree $[L_{\mathfrak{P}}:\Q_p]$ is an {\sl even integer}.
 On the other hand,  \eqref{fp} tells us that 
$[L_{\mathfrak{P}}:\Q_p]$ 
  coincides with the multiplicative order 
  $\mathfrak{f}_p$  
  of the residue $p \bmod n \in (\Z/n\Z)^{*}$. Hence, $\mathfrak{f}_p$ is even. This ends the proof.

\end{proof}

\end{document}